\let\mathnumsetfont\mathbb
\newcommand\Rset{\mathnumsetfont R} 
\newtheorem{teo}{Theorem}
\newtheorem{teo*}{Theorem}
\newtheorem{rmq}{Remark}[section]
\newtheorem{lem}{Lemma}[section]
\newcommand\M{{\cal M}}
\def\limsup{\mathop{\overline{\mathrm{lim}}}}
\def\liminf{\mathop{\underline{\mathrm{lim}}}}
\newcommand\limx{\lim_{x\rightarrow\infty}}
\newcommand\limn{\lim_{n\rightarrow\infty}}
\newcommand\limsupx{\limsup_{x\rightarrow\infty}}
\newcommand\liminfx{\liminf_{x\rightarrow\infty}}
\def\barF{\overline{F}}
\title{A simple estimator for the $\M$-index of functions in $\M$}
\author{Meitner Cadena\thanks{UPMC Paris 6 \& CREAR, ESSEC Business School;\, E-mail: meitner.cadena@etu.upmc.fr or b00454799@essec.edu or meitner.cadena@gmail.com}} 
\begin{document}

\maketitle

\begin{abstract}
An estimator for the $\M$-index of functions of $\M$, a larger class than the class of
regularly varying (RV) functions, is proposed. This index is the tail index of RV functions
and this estimator is thus a new one on the class of RV functions. This estimator
satisfies, assuming suitable conditions, weak and strong consistencies. Asymptotic normality of
this estimator is proved for a large class of RV functions, showing a better performance than some well-known estimators.
Illustrations with simulated and real life datasets are provided.

\vspace{5mm}

\emph{Key words}: estimator; regularly varying function; weak consistency; strong consistency; asymptotic normality; extreme value theory

\vspace{1mm}

\emph{AMS classification}: 62G05; 62G30
\end{abstract}

\vspace{5mm}

\section{Introduction}

A main concern in extreme value theory, involved in many fields of application, such as e.g. hydrology, biology and finance, is the examination of the tail heaviness of a distribution.
This analysis is deeply concentrated on random variables (rv) having survival functions $\barF$ being regularly varying (RV) functions, i.e. for some $\alpha\geq0$, called the tail index of $\barF$,
\begin{equation}\label{eq:20141020:001}
\limx\frac{\barF(tx)}{\barF(x)}=t^{-\alpha}\textrm{\quad($t>0$).}
\end{equation}
%
The class of RV functions with tail index $\alpha$ is denoted by $RV_{-\alpha}$. 

(\ref{eq:20141020:001}) shows that the notion of tail index is intimately related to the heaviness of the tail of a distribution.
For instance, lower values of tail indices correspond to heavier tails.
Hence, a vast literature has been focused on the estimation of this index and its applications as the estimation of extreme quantiles.
See e.g. \cite{BeirlantGoegebeurTeugelsSegersDeWaalFerro} and \cite{deHaanFerreira} and the references
therein for discussions on estimations of the tail index.
A number of estimators of the tail index from a random sample $X_1$, $X_2$, \ldots, $X_n$ have been proposed.
%
The most well-known estimator is one proposed by Hill \cite{Hill1975} in 1975, which shows favorable features to be used than other competitors (see e.g. \cite{Smith1987}, pp. 1181-1182, and \cite{Segers2001}) in terms of rate of convergence.

More recently, Cadena and Kratz introduced the class $\M$ that consists of measurable functions $U:\Rset^+\to\Rset^+$ satisfying (see e.g. \cite{NN2014}, \cite{NN2015CKanalysis} and \cite{NN2015CKbis})
\begin{equation}\label{MkappaBis}
\exists\alpha\in\Rset, \forall \varepsilon > 0, \lim_{x\rightarrow\infty}\frac{U(x)}{x^{\alpha+\epsilon}}=0\text{\quad and\quad}\lim_{x\rightarrow\infty}\frac{U(x)}{x^{\alpha-\epsilon}}=\infty\textrm{.}
\end{equation}
It is not hard to see that the parameter $\alpha$ in \eqref{MkappaBis} is unique, hence called the $\M$-index of $U$, and that $\epsilon$ may be taken sufficiently small.
%
$\M$ is strictly larger than the class of RV functions and the $\M$-index of $U\in\M$ is its tail index if $U$ is RV (see e.g. Proposition 2.1 in \cite{NN2014} or Theorem 1.1 in \cite{NN2015CKanalysis}).

These authors showed several ways to characterize $\M$, i.e. \eqref{MkappaBis}.
The following characterization of $\M$ given by Theorem 1.1 in \cite{NN2014} or Theorem 2.7 in \cite{NN2015CKanalysis} is of interest in this manuscript.
For the sake of completeness of this manuscript, its proof is given in appendix, which is copied from Theorem 1.1 in \cite{NN2014}.

\begin{teo}[Cadena and Kratz, Theorem 1.1 in \cite{NN2014} or Theorem 2.7 in \cite{NN2015CKanalysis}]\label{teoCK}
Let $\eta\in\Rset$ and let $U : \Rset^+ \to \Rset^+$ be a measurable function. Then, $U\in\M$ with $\M$-index $\eta$, if and only if
\begin{equation}\label{teo1.1}
\limx\frac{\log\,U(x)}{\log(x)}=\eta\textrm{.}
\end{equation}
\end{teo}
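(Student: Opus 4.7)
The proof naturally splits into the two implications, and both are essentially a translation between the multiplicative formulation \eqref{MkappaBis} and the logarithmic formulation \eqref{teo1.1}. The basic dictionary is: the inequality $U(x)<x^{\beta}$ is equivalent, after taking logarithms (valid once $x>1$ so that $\log x>0$), to $\log U(x)/\log x<\beta$. This suggests passing from limits of ratios of powers to limits of ratios of logarithms, and vice versa.

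For the direct implication, assume $U\in\M$ with $\M$-index $\eta$. Fix an arbitrary $\varepsilon>0$. From $\lim_{x\to\infty} U(x)/x^{\eta+\varepsilon}=0$ I obtain some $X_1$ such that $U(x)<x^{\eta+\varepsilon}$ for $x>X_1$; from $\lim_{x\to\infty} U(x)/x^{\eta-\varepsilon}=\infty$ I get $X_2$ with $U(x)>x^{\eta-\varepsilon}$ for $x>X_2$. Taking logarithms and dividing by $\log x$ (for $x$ large enough that $\log x>0$) yields
\[
\eta-\varepsilon \;<\; \frac{\log U(x)}{\log x} \;<\; \eta+\varepsilon
\]
for all sufficiently large $x$. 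Since $\varepsilon$ is arbitrary, \eqref{teo1.1} follows.

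For the converse, assume \eqref{teo1.1}. Given $\varepsilon>0$, pick any $\varepsilon'\in(0,\varepsilon)$. By hypothesis, there is $X$ such that $|\log U(x)/\log x-\eta|<\varepsilon'$ for $x>X$, hence $x^{\eta-\varepsilon'}<U(x)<x^{\eta+\varepsilon'}$. Then
\[
\frac{U(x)}{x^{\eta+\varepsilon}} \;<\; x^{\varepsilon'-\varepsilon} \;\xrightarrow[x\to\infty]{}\; 0,
\qquad
\frac{U(x)}{x^{\eta-\varepsilon}} \;>\; x^{\varepsilon-\varepsilon'} \;\xrightarrow[x\to\infty]{}\; \infty,
\]
which establishes \eqref{MkappaBis} with parameter $\eta$. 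The uniqueness of the $\M$-index (already noted after \eqref{MkappaBis}) identifies this parameter with the $\M$-index of $U$.

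I do not foresee a genuine obstacle: the only minor subtlety is ensuring one works with $x$ large enough so that $\log x>0$ and so that $U(x)>0$ (guaranteed since $U$ maps into $\Rset^+$), and keeping track of the small parameters $\varepsilon,\varepsilon'$ in the converse so that the exponents have the correct sign. Everything else is a mechanical application of the exponential/logarithm monotonicity.
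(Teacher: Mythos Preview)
Your proposal is correct and follows essentially the same route as the paper's proof: both directions proceed by translating the inequalities in \eqref{MkappaBis} into inequalities for $\log U(x)/\log x$ via monotonicity of the logarithm, with a small auxiliary parameter (your $\varepsilon'$, the paper's $\varepsilon/2$) to create the gap needed in the converse. The only cosmetic differences are that the paper uses the bounds $U(x)\le \varepsilon\,x^{\eta+\varepsilon}$ and $U(x)\ge \varepsilon^{-1}x^{\eta-\varepsilon}$ (producing harmless $\log(\varepsilon)/\log x$ terms) where you simply take the constant $1$, and the paper writes the converse via $\exp\{\log x\cdot(\cdot)\}$ explicitly; neither changes the argument.
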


Inspired by \eqref{teo1.1}, which is related to quantile-quantile plots incorporating logarithm transformations that are frequently used in the extreme value analysis (see e.g. \cite{BeirlantVynckierTeugels1996}) and have motivated the formulation of estimators of the tail index (see e.g. \cite{KratzResnick1996}), we propose some estimators of the $\M$-index of a tail of distribution belonging to $\M$.
Since any RV function belongs to $\M$, these estimators are new ones for the tail index.
Differences among these estimators and some well-known estimators are pointed out, namely on convergence rates.

In Section \ref{Mainresults} we define an estimator for the $\M$-index of $U\in\M$ and give some of its properties, namely its weak and strong consistency, and its asymptotic normality supposed the tail being RV.
In Section \ref{applications} numerical illustrations are provided on simulated and real data, comparing the performance of our new estimator with some existing ones, when the shape and scale of a tail of distribution vary as well the sample size.
The last section presents conclusions.

\section{Main results}
\label{Mainresults}

All over this paper $X$ is a positive random variable (r.v.) with distribution $F$ and distribution tail denoted by $\barF=1-F$.
Suppose that the endpoint of $F$ is infinite, i.e. $x^*=\sup\big\{x:F(x)<1\big\}=\infty$ (if $x^*<\infty$, applying the change of variable $y=1\big/(x^*-x)$ one obtains a positive variable with infinite endpoint).

Let $X_1$, \ldots, $X_n$ be independent and identically distributed (i.i.d.) r.v.s with distribution $F$ and let $X_{1:n}\leq\cdots\leq X_{n:n}$ be the order statistics of this sample.
The empirical distribution function of $F$ is defined by, for $x>0$, 
$$
F_n(x)=\frac{1}{n}\sum_{i=1}^n1_{\left\{X_i\leq x\right\}}\textrm{.}
$$
We denote by $\barF_n=1-F_n$ its empirical tail of distribution.


\subsection{Defining a 
simple estimator for the $\M$-index}

Assume $\barF\in\M$ with $\M$-index $-\alpha$ ($<0$).
Notice that 
if $\barF$ is regularly varying, $\barF\in\textrm{RV}_{-\alpha}$.

Theorem 1.1 in \cite{NN2014} claims that $\barF$ satisfies \eqref{teo1.1}.
Since $\barF_n(X_{n-k:n})=k\big/n$ ($0\leq k<n$),
%
%
this property is related with the scatterplot with coordinates
$$
\left\{\left(\log\left(n\big/k\right),\log(X_{n-k:n})\right),1\leq k< n\right\}.
$$
known as the Pareto quantile plot (see e.g. \cite{BeirlantVynckierTeugels1996}).
This plot is often used to analyze data behaviors related to extreme values.
For instance, when the sample satisfy a strict Pareto behavior this relationship is linear.
Also, this type of coordinates inspired the formulation of estimators for the tail index, for instance the called qq-estimator (see \cite{KratzResnick1996}).

%

From \eqref{teo1.1} a natural estimator for $\alpha$ may be formulated.
This is our proposal of estimator, which is given by, for $1\leq k<n$,
$$
\hat{\alpha}=-\frac{\log\,\barF_n(X_{n-k:n})}{\log(X_{n-k:n})}\textrm{,}
$$
that may be rewritten as
\begin{equation}\label{eq:20150316:010}
\hat{\alpha}=\frac{\log\left(n\big/k\right)}{\log(X_{n-k:n})}\textrm{.}
\end{equation}
We see that this estimator is also based on the largest values of the sample but unlike most of well-known estimators a unique order statistic is used.
This does not consider necessarily the top values, which are involved in the well-known estimators.
Also, the heaviness of tails of some distributions, other than RV distributions, could be analyzed,
for instance $\barF(x)=e^{-\lfloor \log(x)\rfloor}$ in a neighborhood of $\infty$, where $\lfloor x\rfloor$ means the integer lower than or equal to $x$.

Convergence in distribution, convergence in probability and almost sure convergence are denoted by $\overset{d}{\to}$, $\overset{P}{\to}$ and $\overset{a.s.}{\to}$ respectively.

\subsection{Consistency of the new estimator}

\subsubsection{Weak consistency}
\label{consistency}

We consider sequences of positive integers $k=k(n)$ satisfying
\begin{equation}\label{kn}
\textrm{$1\leq k\leq n-1$,\quad $k\to\infty$, \quad and \quad $k\big/n\to0$ \quad as \quad $n\to\infty$.}\tag{\textrm{k}}
\end{equation}

In order to prove weak consistency of $\hat{\alpha}$ we need the following standard results:


\begin{lem}\label{lem:main:lem3.2.1}
Let $X_1$, $X_2$, \ldots be i.i.d. r.v.s with distribution $F:\Rset^+\to\Rset^+$ such that $F<1$.
Let $X_{1:n}\leq\cdots\leq X_{n:n}$ be its order statistics.
If $k=k(n)$ satisfies \eqref{kn}, then we have
$$
X_{n-k:n}\overset{a.s.}{\to}\infty\quad\textrm{as\quad $n\rightarrow\infty$.}
$$
\end{lem}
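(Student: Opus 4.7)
The plan is to reduce the statement about an order statistic to a concentration statement about a binomial count, and then use Borel--Cantelli combined with the hypotheses in \eqref{kn}.

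First I would fix an arbitrary level $M>0$ and translate the event $\{X_{n-k:n}\le M\}$ into a statement about the number of exceedances. Precisely, $X_{n-k:n}\le M$ if and only if at most $k$ of $X_1,\ldots,X_n$ exceed $M$. Writing $N_n=\#\{i\le n:X_i>M\}$, we have $N_n\sim\textrm{Binomial}(n,p)$ with $p=\barF(M)>0$ (this is where $F<1$ is used: the endpoint being infinite guarantees that $\barF(M)$ is strictly positive for every finite $M$). Thus
$$
P\bigl(X_{n-k:n}\le M\bigr)=P(N_n\le k).
$$

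Next I would exploit the hypothesis $k/n\to 0$. Since $p>0$, there exists $n_0=n_0(M)$ such that $k/n\le p/2$ for all $n\ge n_0$. For such $n$, Hoeffding's inequality applied to $N_n/n$ gives
$$
P(N_n\le k)\le \exp\!\bigl(-2n(p-k/n)^2\bigr)\le\exp(-n p^2/2),
$$
which is summable in $n$. The Borel--Cantelli lemma then yields
$$
P\bigl(X_{n-k:n}\le M\ \text{infinitely often}\bigr)=0,
$$
so that $\liminf_{n\to\infty} X_{n-k:n}\ge M$ almost surely.

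Finally, I would apply this along a countable exhaustion $M\in\{1,2,3,\ldots\}$: each event $\{\liminf_n X_{n-k:n}\ge M\}$ holds almost surely, so their countable intersection does as well, and on that intersection $X_{n-k:n}\to\infty$. I do not actually need the hypothesis $k\to\infty$ in this argument; only $k/n\to 0$ and $F<1$ matter. The only mildly delicate point is ensuring that the concentration bound is uniform enough in $k=k(n)$ to be summable, and that is handled by the elementary observation that eventually $k/n\le p/2$. If one prefers to avoid Hoeffding, the same conclusion follows from the strong law of large numbers applied to $N_n/n\to p$ almost surely, together with $k/n\to 0<p$, which immediately gives $N_n>k$ eventually a.s.
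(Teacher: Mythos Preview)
Your proof is correct and, at its core, the same as the paper's: the paper argues by contradiction that if $X_{n-k:n}<r$ infinitely often then $k/n\ge n^{-1}\sum_i 1_{\{X_i>r\}}>\barF(r)/2$ infinitely often, which clashes with $k/n\to0$---this is exactly your SLLN alternative, just phrased as a contradiction without the explicit countable exhaustion. Your Hoeffding/Borel--Cantelli route is a mild variant that reaches the same conclusion with a quantitative bound; either way the engine is the comparison of $k/n\to0$ against $N_n/n\to\barF(M)>0$, and your observation that $k\to\infty$ is unused is also true of the paper's argument.
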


\begin{proof}
We proof this lemma inspired by Lemma 3.2.1 given in \cite{deHaanFerreira}.
%
%

We proceed by contradiction.
Suppose that there exists $r$ such that $X_{n-k:n} < r$ infinitely often and $\barF(r)>0$.
Then we would have, for large $n$,


$$
\frac{1}{n}\sum_{i=1}^n1_{\{X_i>X_{n-k:n}\}}
\ =\ \frac{k}{n}
\ \geq\ \frac{1}{n}\sum_{i=1}^n1_{\{X_i>r\}}
\ >\ \frac{\overline{F}(r)}{2}
$$

infinitely often, which is a contradiction when $n\rightarrow\infty$, because $k\big/n\rightarrow0$.
\end{proof}

\begin{lem}\label{teo:20150227:001}
Let $E_1$, \ldots, $E_n$ be i.i.d. r.v.s with standard exponential distribution $F$.
Let $E_{1:n}\leq\cdots\leq E_{n:n}$ be its order statistics.
If \eqref{kn} is satisfied, then we have
$$
\sqrt{k}\left(E_{n-k:n}-\log\left(\frac{n}{k}\right)\right)
$$
is asymptotically standard normal as $n\to\infty$.
\end{lem}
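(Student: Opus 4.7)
My plan is to reduce the problem to a sum of independent random variables via the Rényi representation for exponential order statistics, and then apply a central limit theorem for triangular arrays.

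First I would write $E_{n-k:n} \stackrel{d}{=} \sum_{i=k+1}^{n} Y_i/i$, where $Y_{k+1},\ldots,Y_n$ are i.i.d.\ standard exponentials. This uses the classical fact that the normalized spacings $(n-j+1)(E_{j:n}-E_{j-1:n})$ are i.i.d.\ $\mathrm{Exp}(1)$. In this representation the mean is $\mu_n = \sum_{i=k+1}^{n} 1/i$ and the variance is $\sigma_n^2 = \sum_{i=k+1}^{n} 1/i^2$.

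Next I would do the asymptotic bookkeeping on $\mu_n$ and $\sigma_n^2$. Comparing the partial sums with the corresponding integrals gives $\mu_n = \log(n/k) + O(1/k)$ and $\sigma_n^2 = 1/k - 1/n + O(1/k^2)$, so $k\sigma_n^2 \to 1$ under \eqref{kn}. In particular, $\sqrt{k}\bigl(\mu_n - \log(n/k)\bigr) = O(1/\sqrt{k}) \to 0$, so replacing $\mu_n$ by $\log(n/k)$ is free for the purposes of the limit theorem.

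The main technical step is showing $(E_{n-k:n}-\mu_n)/\sigma_n \overset{d}{\to} N(0,1)$. I would apply the Lyapunov CLT to the triangular array of centered summands $Z_{n,i} = Y_i/i - 1/i$ for $i=k+1,\ldots,n$. Since $Y_i-1$ has a finite (universal) third absolute moment $c$, one gets $\sum_{i=k+1}^{n} E|Z_{n,i}|^3 \leq c\sum_{i=k+1}^{n} 1/i^3 = O(1/k^2)$, while $\sigma_n^3 \sim k^{-3/2}$; hence the Lyapunov ratio is $O(1/\sqrt{k}) \to 0$. The CLT then gives the normal limit, and Slutsky combined with $\sqrt{k}\sigma_n \to 1$ yields $\sqrt{k}(E_{n-k:n}-\mu_n) \overset{d}{\to} N(0,1)$. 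Adding the negligible term $\sqrt{k}(\mu_n - \log(n/k))$ completes the proof.

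I do not expect a real obstacle: the delicate point is just keeping the orders of the harmonic and generalized-harmonic tails straight so that the factor $\sqrt{k}$ kills the mean-approximation error while leaving the variance asymptotically $1$. An equivalent route would be to note that $e^{-E_{n-k:n}} \stackrel{d}{=} U_{k+1:n}$ (so $E_{n-k:n}=-\log U_{k+1:n}$ with $U_{k+1:n}$ the uniform order statistic), apply the classical CLT for central uniform order statistics $\sqrt{n}(U_{k+1:n}-(k+1)/(n+1))$ normalized appropriately, and then use the delta method with $-\log(\cdot)$; but the Rényi-representation route above seems the most self-contained.
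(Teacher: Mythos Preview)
Your argument is correct. The R\'enyi representation $E_{n-k:n}\stackrel{d}{=}\sum_{i=k+1}^{n}Y_i/i$ is exactly what one obtains from the usual form $E_{j:n}\stackrel{d}{=}\sum_{i=1}^{j}Y_i/(n-i+1)$ after the change of index $m=n-i+1$, and your asymptotics for $\mu_n$, $\sigma_n^2$ and the Lyapunov ratio are all in order.

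However, the route is quite different from the paper's. The paper does not carry out any direct computation: it simply observes that for the standard exponential law the tail-quantile function is $U(x)=\log x$, and then invokes Theorem~2.1.1 of de~Haan--Ferreira on the asymptotic normality of intermediate order statistics, which states that $\sqrt{k}\,(E_{n-k:n}-U(n/k))\big/\big((n/k)\,U'(n/k)\big)$ is asymptotically standard normal; since $(n/k)\,U'(n/k)=1$ the result is immediate. So the paper trades your explicit, self-contained CLT computation for a one-line appeal to a general black box. Your approach has the advantage of being elementary and of not importing any external machinery; in fact it is exactly the technique the paper itself uses later in the proof of Theorem~\ref{teo:20150329:001}, where the same R\'enyi decomposition and the same variance estimate $k\sum_{j=k+1}^{n}j^{-2}\to1$ reappear. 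The paper's approach is shorter here only because it defers the real work to the cited theorem.
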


\begin{proof}
%
%
Let $U=(1-F)^\leftarrow$, $U(x)=\inf\big\{y:F(y)\geq1-1\big/y\big\}$.
Since $1-F(x)=e^{-x}$ we have $U(x)=\log(x)$.
%
%
%
%
Then, by Theorem 2.1.1 of \cite{deHaanFerreira},
$$
\sqrt{k}\frac{E_{n-k:n}-U\left(\frac{n}{k}\right)}{\frac{n}{k}U'\left(\frac{n}{k}\right)} \ = \ \sqrt{k}\frac{E_{n-k:n}-\log\left(\frac{n}{k}\right)}{\frac{n}{k}\times\frac{k}{n}} 
	\ = \ \sqrt{k}\left(E_{n-k:n}-\log\left(\frac{n}{k}\right)\right)
$$
is asymptotically standard normal as $n\to\infty$, supposing \eqref{kn}.
\end{proof}

\begin{lem}\label{lem:20150324:001}
Let $E_1$, \ldots, $E_n$ be i.i.d. r.v.s with standard exponential distribution $F$.
Let $E_{1:n}\leq\cdots\leq E_{n:n}$ be its order statistics.
If \eqref{kn} is satisfied, then we have
$$
\frac{E_{n-k:n}}{\log\left(\frac{n}{k}\right)}\overset{P}{\to}1\quad\textrm{as\quad $n\to\infty$.}
$$
\end{lem}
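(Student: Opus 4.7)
The plan is to leverage Lemma \ref{teo:20150227:001}, which was proved immediately before, so that the present statement reduces to a short tightness argument.

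First I would rewrite the quotient in a form that isolates the error around $\log(n/k)$. From Lemma \ref{teo:20150227:001}, set
$$
Z_n \;=\; \sqrt{k}\left(E_{n-k:n}-\log\!\left(\tfrac{n}{k}\right)\right),
$$
so that $Z_n \overset{d}{\to} \mathcal{N}(0,1)$ under condition \eqref{kn}. Then an immediate algebraic rearrangement gives
$$
\frac{E_{n-k:n}}{\log(n/k)} - 1 \;=\; \frac{Z_n}{\sqrt{k}\,\log(n/k)}.
$$

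Next I would argue that the right-hand side tends to $0$ in probability. Since $Z_n$ converges in distribution to a standard normal random variable, the sequence $(Z_n)$ is tight, i.e.\ bounded in probability. Meanwhile, condition \eqref{kn} ensures that $k\to\infty$ and $k/n\to 0$, whence $\log(n/k)\to\infty$ and in particular $\sqrt{k}\,\log(n/k)\to\infty$. Dividing a tight sequence by a deterministic sequence diverging to $\infty$ produces a sequence that converges to $0$ in probability (this is the standard $O_P(1)/\infty = o_P(1)$ argument, or equivalently an application of Slutsky's theorem to the product $Z_n\cdot(\sqrt{k}\log(n/k))^{-1}$). Therefore $E_{n-k:n}/\log(n/k) - 1 \overset{P}{\to} 0$, which is exactly the claim.

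There is no real obstacle here: the work was done in Lemma \ref{teo:20150227:001}, and the present lemma is just the first-order consequence obtained by normalising by $\log(n/k)$ instead of by $1/\sqrt{k}$. The only point that deserves a one-line justification is that convergence in distribution to a tight limit implies boundedness in probability, so that division by the diverging deterministic factor $\sqrt{k}\log(n/k)$ is legitimate and yields convergence in probability to $0$.
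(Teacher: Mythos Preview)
Your proof is correct and follows exactly the paper's approach: the paper's proof simply reads ``Application of Lemma \ref{teo:20150227:001} and e.g.\ \cite{vanderVaart}, exercice 18, page 24, or \cite{BishopFienbergHolland2007}, exercice 6, page 484,'' and your argument spells out precisely the content of those cited exercises (convergence in distribution $\Rightarrow$ tightness, then divide by the diverging deterministic factor $\sqrt{k}\log(n/k)$ via Slutsky).
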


\begin{proof}
Application of Lemma \ref{teo:20150227:001} and e.g. \cite{vanderVaart}, exercice 18, page 24, or \cite{BishopFienbergHolland2007}, exercice 6, page 484.
\end{proof}

We can prove:

\begin{teo}[{Weak consistency of $\hat{\alpha}$}]\label{prop:20150324:001}

Let $X$, $X_1$, \ldots, $X_n$ be i.i.d. r.v.s with continuous distribution $F$ such that $\barF\in\M$ with $\M$-index $-\alpha$ ($\alpha>0$).
Let $X_{1:n}\leq\cdots\leq X_{n:n}$ be its order statistics.
If \eqref{kn} is satisfied, then we have
\begin{equation}\label{eq:20150328010}
\frac{1}{\hat{\alpha}}\overset{P}{\to}\frac{1}{\alpha}\quad\textrm{as \quad $n\to\infty$.}
\end{equation}
\end{teo}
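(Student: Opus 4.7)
The plan is to rewrite
$$\frac{1}{\hat{\alpha}}=\frac{\log X_{n-k:n}}{\log(n/k)}$$
and reduce the problem to an exponential order statistic via the probability integral transform. Since $F$ is continuous, the variables $E_i:=-\log\barF(X_i)$ are i.i.d.\ standard exponential, and because $\barF$ is non-increasing the map $x\mapsto-\log\barF(x)$ is non-decreasing, so almost surely $E_{n-k:n}=-\log\barF(X_{n-k:n})$. The strategy is then to control $\log X_{n-k:n}/\log(n/k)$ by decomposing it as a product of two factors, one handled by Theorem~\ref{teoCK} and one by Lemma~\ref{lem:20150324:001}.

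More precisely, Lemma~\ref{lem:main:lem3.2.1} gives $X_{n-k:n}\overset{a.s.}{\to}\infty$, so substituting into the characterization $\log\barF(x)/\log x\to-\alpha$ supplied by Theorem~\ref{teoCK} (applied to $\barF\in\M$ with $\M$-index $-\alpha$) yields
$$\frac{\log\barF(X_{n-k:n})}{\log X_{n-k:n}}\overset{a.s.}{\to}-\alpha.$$
Using the identity $E_{n-k:n}=-\log\barF(X_{n-k:n})$ and the fact that $\log X_{n-k:n}>0$ eventually (since $X_{n-k:n}\to\infty$) together with $\alpha>0$, this inverts to
$$\frac{\log X_{n-k:n}}{E_{n-k:n}}\overset{a.s.}{\to}\frac{1}{\alpha}.$$

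Finally I would apply Lemma~\ref{lem:20150324:001} to obtain $E_{n-k:n}/\log(n/k)\overset{P}{\to}1$, and conclude by Slutsky's theorem from the product decomposition
$$\frac{1}{\hat{\alpha}}=\frac{\log X_{n-k:n}}{E_{n-k:n}}\cdot\frac{E_{n-k:n}}{\log(n/k)}\overset{P}{\to}\frac{1}{\alpha}\cdot 1=\frac{1}{\alpha}.$$
The main delicate point is justifying the composition step: the deterministic limit $\log\barF(x)/\log x\to-\alpha$ (a statement along $x\to\infty$) must be transferred to the random sequence $X_{n-k:n}$, which requires the almost sure divergence supplied by Lemma~\ref{lem:main:lem3.2.1}. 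Beyond this, the argument is a routine assembly of the three preceding results.
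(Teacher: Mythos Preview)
Your proof is correct and follows essentially the same route as the paper: both decompose $1/\hat{\alpha}$ into the product of $\log X_{n-k:n}/(-\log\barF(X_{n-k:n}))$ and $-\log\barF(X_{n-k:n})/\log(n/k)$, handle the first factor via Theorem~\ref{teoCK} combined with Lemma~\ref{lem:main:lem3.2.1}, and the second via the probability integral transform $E_i=-\log\barF(X_i)$ together with Lemma~\ref{lem:20150324:001}. Your write-up is in fact slightly more careful than the paper's in justifying that $E_{n-k:n}=-\log\barF(X_{n-k:n})$ and in invoking Slutsky explicitly.
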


\begin{proof}
Since
$
\displaystyle \limx\frac{\log\,\barF(x)}{\log(x)}=-{\alpha}\textrm{,}
$
by applying Theorem \ref{teoCK},
%
combining it with Lemma \ref{lem:main:lem3.2.1} gives
$$
\limn\frac{\log(X_{n-k:n})}{\log\left(\frac{n}{k}\right)}
\ =\ \limn\frac{\log(X_{n-k:n})}{\log\,\barF(X_{n-k:n})}\times\frac{\log\,\barF(X_{n-k:n})}{\log\left(\frac{n}{k}\right)}\ \overset{a.s.}{=}\ 
-\frac{1}{\alpha}\times\limn\frac{\log\,\barF(X_{n-k:n})}{\log\left(\frac{n}{k}\right)}\textrm{.}
$$
%
Noticing that $-\log\,\barF(X)$ is a r.v. following a standard exponential distribution $G$ (see e.g. \cite{GalambosKotz}, page 18), 
$-\log\,\barF(X_1)$, \ldots, $-\log\,\barF(X_n)$ is a sample of i.i.d. r.v.s following $G$, 
and $-\log\,\barF(X_{1:n})$, \ldots, $-\log\,\barF(X_{n:n})$ is its order statistics (see e.g. \cite{GalambosKotz}, page 20),
then applying Lemma \ref{lem:20150324:001} 
provides that
$$
\limn\frac{1}{\hat{\alpha}}
\ =\ \limn\frac{\log(X_{n-k:n})}{\log\left(\frac{n}{k}\right)}
\ \overset{P}{=}\ \frac{1}{\alpha}\textrm{.}
$$
This concludes the proof.
\end{proof}

\begin{rmq}\label{rmk01}
Theorem \ref{prop:20150324:001} holds for some variants of $\hat{\alpha}$, for instance
\begin{eqnarray*}
\hat{\alpha}_1 & = & \frac{C_1+\log\left(n\big/k\right)}{C_2+\log(X_{n-k:n})}\textrm{,\quad for given constants $C_1,C_2\in\Rset$, for enough large $n$.} \\
\hat{\alpha}_2 & = & \frac{1}{k_2-k_1+1}\sum_{k=k_1}^{k_2}\frac{\log\left(n\big/k\right)}{\log(X_{n-k:n})}\textrm{,\quad for given $1\leq k_1<k_2$ being $k_2-k_1$ a constant.}
\end{eqnarray*}
\end{rmq}

\subsubsection{Strong consistency}
\label{strongconsistency}

Lemma \ref{lem:20150324:001} may be strengthened under an additional hypothesis using a result by 
Davis and Resnick \cite{DavisResnick1984}.

\begin{lem}[Lemma 5.1 given by Davis and Resnick (1984) \cite{DavisResnick1984}]\label{lem:20150324:001dr}
Let $E_1$, \ldots, $E_n$ be i.i.d. r.v.s with standard exponential distribution $F$.
Let $E_{1:n}\leq\cdots\leq E_{n:n}$ be its order statistics.
Then, if $k=k(n)=\lfloor n^\delta\rfloor$, with $0<\delta<1$, then we have 
$$
E_{n-k:n}-\log\left(\frac{n}{k}\right)\overset{a.s.}{\to}0\quad\textrm{as $n\to\infty$.}
$$
\end{lem}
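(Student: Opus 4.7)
The plan is to couple the exponential order statistics with partial sums of a single i.i.d.\ exponential sequence, decompose the target difference, and handle the genuinely delicate part by a Borel-Cantelli argument. Let $\eta_1,\eta_2,\ldots$ be i.i.d.\ standard exponential, put $\Gamma_m=\eta_1+\cdots+\eta_m$, and use the classical construction $U_{i:n}=\Gamma_i/\Gamma_{n+1}$ of uniform order statistics of size $n$. Applying the quantile transform $x\mapsto -\log(1-x)$ places exponential order statistics $E_{i:n}$ on the same probability space, and for $i=n-k$ one reads off
$$
E_{n-k:n}\ =\ \log\Gamma_{n+1}-\log M_{n,k},\qquad M_{n,k}:=\eta_{n-k+1}+\cdots+\eta_{n+1},
$$
where $M_{n,k}$ is a sum of $k+1$ i.i.d.\ standard exponentials whose indices shift with $n$.

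With this coupling I would decompose
$$
E_{n-k:n}-\log(n/k)\ =\ \bigl[\log\Gamma_{n+1}-\log(n+1)\bigr]-\bigl[\log M_{n,k}-\log(k+1)\bigr]+\log\tfrac{(n+1)k}{n(k+1)},
$$
and treat the three pieces separately. The deterministic remainder is $O(1/n)+O(1/k)$ and vanishes since $k=\lfloor n^\delta\rfloor\to\infty$. The first bracket tends to $0$ almost surely by the strong law of large numbers applied to $\Gamma_{n+1}/(n+1)$, together with continuity of $\log$; this step uses no property of $k$.

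The main obstacle is showing $M_{n,k}/(k+1)\to 1$ almost surely. One cannot invoke the SLLN directly, since the blocks $M_{n,k}$ shift with $n$ in a non-monotone way and are not partial sums of any fixed sequence. The plan is a Borel-Cantelli argument based on the Chernoff/Bernstein deviation bound for sums of i.i.d.\ exponentials: for every $\varepsilon>0$ there exists $c(\varepsilon)>0$ with
$$
P\!\left(\left|\frac{M_{n,k}}{k+1}-1\right|>\varepsilon\right)\ \leq\ 2\exp\bigl(-c(\varepsilon)(k+1)\bigr).
$$
Because $k=\lfloor n^\delta\rfloor$ grows at a polynomial rate, the right-hand side is summable in $n$, so Borel-Cantelli yields the a.s.\ convergence, which transfers to $\log M_{n,k}-\log(k+1)\to 0$ by continuity of $\log$. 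The polynomial-growth hypothesis is exactly what makes the deviation probabilities summable, and this is what separates the present strong-consistency statement from the weak-consistency regime of Lemma~\ref{lem:20150324:001}. Combining the three pieces of the decomposition then gives $E_{n-k:n}-\log(n/k)\to 0$ almost surely, as required.
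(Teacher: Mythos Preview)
The paper does not give its own proof of this lemma: it is quoted as Lemma~5.1 of Davis and Resnick~\cite{DavisResnick1984} and used as a black box in the derivation of Theorem~\ref{prop:20150324:001bis}. There is therefore nothing in the present paper to compare your argument against.

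On its own merits your proof is correct. The identity $E_{n-k:n}=\log\Gamma_{n+1}-\log(\Gamma_{n+1}-\Gamma_{n-k})$ follows exactly as you say from $U_{i:n}=\Gamma_i/\Gamma_{n+1}$ and the quantile transform, and your three-term decomposition is accurate. The only substantive step is the almost-sure convergence of the sliding block means $M_{n,k}/(k+1)\to 1$, and the Chernoff bound for Gamma$(k+1,1)$ variables indeed gives $P(|M_{n,k}/(k+1)-1|>\varepsilon)\le 2\exp(-c(\varepsilon)(k+1))$ with $c(\varepsilon)>0$, after which Borel--Cantelli applies because $k=\lfloor n^\delta\rfloor$ makes the right-hand side summable in $n$. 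One small remark: your summability argument actually goes through under the weaker condition $k(n)/\log n\to\infty$, so the specific polynomial form $k=\lfloor n^\delta\rfloor$ is not used sharply by your method.
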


Note that $k$ defined in Lemma \ref{lem:20150324:001dr} satisfies \eqref{kn}. 
This lemma is a refinement of Lemma \ref{lem:20150324:001}.
Using similar arguments as in the proof of Theorem \ref{prop:20150324:001} and applying Lemma \ref{lem:20150324:001dr} instead of Lemma \ref{lem:20150324:001}, we obtain: 

\begin{teo}[{Strong consistency of $\hat{\alpha}$}]\label{prop:20150324:001bis}
Let $X$, $X_1$, \ldots, $X_n$ be i.i.d. r.v.s with continuous distribution $F$ such that $\barF\in\M$ with $\M$-index $-\alpha$ ($\alpha>0$).
Let $X_{1:n}\leq\cdots\leq X_{n:n}$ be its order statistics.
If $k=k(n)=\lfloor n^\delta\rfloor$, with $0<\delta<1$, then we have
\begin{equation}\label{eq:20150328012}
\frac{1}{\hat{\alpha}}\overset{a.s.}{\to}\frac{1}{\alpha}\quad\textrm{as\quad $n\to\infty$.}
\end{equation}
\end{teo}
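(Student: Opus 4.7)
The plan is to mirror the proof of Theorem \ref{prop:20150324:001}, replacing Lemma \ref{lem:20150324:001} with the stronger Lemma \ref{lem:20150324:001dr} so that each limit in probability is upgraded to an almost-sure limit. First I would check that the choice $k=\lfloor n^\delta\rfloor$ with $0<\delta<1$ satisfies \eqref{kn}, so that the preparatory results used in the weak-consistency proof remain at our disposal; in particular Lemma \ref{lem:main:lem3.2.1} still applies.

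As in the weak-consistency argument, I would write
$$\frac{1}{\hat{\alpha}}\ =\ \frac{\log(X_{n-k:n})}{\log(n/k)}\ =\ \frac{\log(X_{n-k:n})}{\log\,\barF(X_{n-k:n})}\cdot\frac{\log\,\barF(X_{n-k:n})}{\log(n/k)}.$$
The first factor is treated exactly as before: Lemma \ref{lem:main:lem3.2.1} gives $X_{n-k:n}\overset{a.s.}{\to}\infty$, and Theorem \ref{teoCK} applied to $\barF\in\M$ with $\M$-index $-\alpha$ then forces $\log(X_{n-k:n})/\log\,\barF(X_{n-k:n})\overset{a.s.}{\to}-1/\alpha$.

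The substitution takes place in the second factor. Since $F$ is continuous, $-\log\,\barF(X)$ follows a standard exponential law, and because the map $x\mapsto -\log\,\barF(x)$ is non-decreasing, the transformed sample inherits the ordering, i.e. $-\log\,\barF(X_{i:n})$ are the order statistics of an i.i.d. standard exponential sample. Setting $E_{n-k:n}=-\log\,\barF(X_{n-k:n})$, Lemma \ref{lem:20150324:001dr} yields $E_{n-k:n}-\log(n/k)\overset{a.s.}{\to}0$; dividing by $\log(n/k)\to\infty$ gives $-\log\,\barF(X_{n-k:n})/\log(n/k)\overset{a.s.}{\to}1$. Multiplying the two almost-sure limits produces $1/\hat{\alpha}\overset{a.s.}{\to}1/\alpha$, as required.

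I do not expect a substantial obstacle: the argument is essentially bookkeeping, and the only point requiring care is the distributional transfer from the $X$-order statistics to exponential order statistics, which relies on the continuity of $F$ to ensure $-\log\,\barF(X)\sim\mathrm{Exp}(1)$ and on the monotonicity of $-\log\,\barF$ to preserve the ordering. Once this standard observation is recorded, the rest is a direct substitution of Lemma \ref{lem:20150324:001dr} for Lemma \ref{lem:20150324:001} inside the proof of Theorem \ref{prop:20150324:001}.
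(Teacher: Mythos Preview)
Your proposal is correct and matches the paper's approach exactly: the paper does not spell out a separate proof but simply states that one uses the same arguments as in Theorem~\ref{prop:20150324:001}, replacing Lemma~\ref{lem:20150324:001} by Lemma~\ref{lem:20150324:001dr}. Your write-up is precisely that substitution carried out in detail, including the verification that $k=\lfloor n^\delta\rfloor$ satisfies \eqref{kn} and the standard transfer to exponential order statistics via $-\log\barF(X)$.
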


\begin{rmq}\label{rmk01bis}
Theorem \ref{prop:20150324:001bis} holds for the variants of $\hat{\alpha}$, $\hat{\alpha}_1$ and $\hat{\alpha}_2$, presented in Remark \ref{rmk01}.
\end{rmq}

\subsection{Asymptotic normality}
\label{normalasymptotic}

In order to analyze the asymptotic normality of $\hat{\alpha}$ we will assume, as in Hall \cite{Hall1982},
a continuous distribution $F$ with $\barF$ satisfying, for given $\alpha,C>0$, as $x\to\infty$,
\begin{equation}\label{eq:20150325:001}
\barF(x) = Cx^{-\alpha}\big(1+o(x^{-\beta})\big)\textrm{.}
\end{equation}
%
Note that $\barF$ is RV with tail index $\alpha$, so $\barF\in\M$ with $\M$-index $-\alpha$.
%

In order to take into account $C$ involved in \eqref{eq:20150325:001}, we modify a bit $\hat{\alpha}$ by defining it as
\begin{equation}\label{eq:20150316:010bis}
\hat{\alpha}=\frac{\log\left(n\big/k\right)+\log(C)}{\log(X_{n-k:n})}\textrm{.}
\end{equation}
Note that this estimator corresponds to the variant $\hat{\alpha}_1$ when taking $C_1=\log(C)$ and $C_2=0$.


Our main result concerning the asymptotic normality of $\hat{\alpha}$ follows.
\begin{teo}[{Asymptotic normality of $\hat{\alpha}$}]\label{teo:20150329:001}
Assume that $F$ is a continuous distribution on $\Rset^+$ 
satisfying $\barF(x) = Cx^{-\alpha}\big(1+o(x^{-\beta})\big)$ as $x\to\infty$, for positive constants $C$, $\alpha$ and $\beta$.
Let $X_1$, \ldots, $X_n$ be i.i.d. r.v.s with distribution $F$, and let $X_{1:n}\leq\cdots\leq X_{n:n}$ be its order statistics.
If \eqref{kn} is satisfied, then
\begin{equation}\label{eq:20150329:002}
\sqrt{k}\left(\log\left(\frac{n}{k}\right)+\log(C)\right)\left(\frac{1}{\hat{\alpha}}-\frac{1}{\alpha}\right)\overset{d}{\to}\mathcal{N}\big(0,\alpha^{2}\big)\textrm{.}
\end{equation}
\end{teo}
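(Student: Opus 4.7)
First I would reduce the claim to a statement about a centered linear statistic. Multiplying out and using the definition \eqref{eq:20150316:010bis} of $\hat{\alpha}$ gives the identity
$$
\sqrt{k}\Bigl(\log(n/k)+\log C\Bigr)\Bigl(\tfrac{1}{\hat{\alpha}}-\tfrac{1}{\alpha}\Bigr)\ =\ \frac{\sqrt{k}}{\alpha}\Bigl(\alpha\log X_{n-k:n}-\log(n/k)-\log C\Bigr),
$$
so it suffices to establish a CLT for the right-hand side.

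Next I would transport the problem to standard exponential order statistics. Setting $E_i := -\log\barF(X_i)$, these are i.i.d.\ $\mathrm{Exp}(1)$ (as noted in the proof of Theorem \ref{prop:20150324:001}) and, because $-\log\barF$ is increasing, $E_{n-k:n}=-\log\barF(X_{n-k:n})$. Hall's condition \eqref{eq:20150325:001} yields, after taking $-\log$ of both sides and expanding,
$$
E_{n-k:n}\ =\ \alpha\log X_{n-k:n}-\log C-\log\!\bigl(1+o(X_{n-k:n}^{-\beta})\bigr),
$$
hence $\alpha\log X_{n-k:n}-\log(n/k)-\log C\ =\ (E_{n-k:n}-\log(n/k))+R_n$ with $R_n=\log(1+o(X_{n-k:n}^{-\beta}))$. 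Lemma \ref{teo:20150227:001} then supplies the asymptotic normality of the leading term:
$$
\frac{\sqrt{k}}{\alpha}\bigl(E_{n-k:n}-\log(n/k)\bigr)\ \overset{d}{\to}\ \mathcal{N}\!\Bigl(0,\tfrac{1}{\alpha^{2}}\Bigr),
$$
and Slutsky's theorem finishes the proof provided $\sqrt{k}\,R_n\overset{P}{\to}0$.

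The main obstacle is precisely the control of the bias term $\sqrt{k}\,R_n$. Since $X_{n-k:n}\overset{P}{\to}\infty$ by Lemma \ref{lem:main:lem3.2.1}, one has $R_n=o_P(X_{n-k:n}^{-\beta})$, and combining Lemma \ref{lem:20150324:001} applied to the exponentialized variables $E_{n-k:n}$ with Hall's tail \eqref{eq:20150325:001} gives $X_{n-k:n}$ of order $(Cn/k)^{1/\alpha}$ in probability, so that
$$
\sqrt{k}\,R_n\ =\ O_P\!\Bigl(k^{1/2+\beta/\alpha}\,n^{-\beta/\alpha}\Bigr).
$$
This tends to $0$ as soon as $k$ is chosen to satisfy, in addition to \eqref{kn}, the Hall-type growth restriction $k=o\!\bigl(n^{2\beta/(2\beta+\alpha)}\bigr)$. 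Under such a choice (implicit in the hypothesis on the second-order parameter $\beta$), Slutsky's theorem combines the two displays above and delivers the advertised limit, with the variance $\alpha^{-2}$ coming from the $1/\alpha$ factor in the algebraic reduction.
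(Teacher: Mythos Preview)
Your reduction coincides with the paper's: both prove the equivalent statement that $\sqrt{k}\bigl(\log X_{n-k:n}-\alpha^{-1}\log(n/k)-\alpha^{-1}\log C\bigr)$ is asymptotically normal, and both pass to standard exponential order statistics. The technical implementation differs slightly: the paper first inverts $\barF$ to obtain $\barF^{\,-1}(t)=C^{1/\alpha}t^{-1/\alpha}e^{o(t^{\beta/\alpha})}$, writes $X_{n-k:n}=\barF^{\,-1}(e^{-S_{n-k}})$ via R\'enyi's representation, and then proves the CLT for the resulting sum $\sum_{j}(E_{n-j+1}-1)/(n-j+1)$ from scratch. You instead use the probability integral transform $E_i=-\log\barF(X_i)$, read off the relation $E_{n-k:n}=\alpha\log X_{n-k:n}-\log C+o_P(X_{n-k:n}^{-\beta})$ directly from Hall's condition, and invoke the already-available Lemma~\ref{teo:20150227:001}. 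Your route is shorter and recycles existing lemmas; the paper's is more self-contained.

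Two remarks. First, your limiting variance $\alpha^{-2}$ matches the paper's own computation (it obtains $k\alpha^{-2}\sum_{j=k+1}^n j^{-2}\sim\alpha^{-2}$); the $\alpha^{2}$ in the displayed statement appears to be a misprint, so do not try to reconcile your answer with it. Second, you are right that controlling $\sqrt{k}\,R_n$ requires the extra growth restriction $k=o\bigl(n^{2\beta/(2\beta+\alpha)}\bigr)$ (or at least $O$ of it). The paper's proof carries the same residual term $\sqrt{k}\cdot o_p\bigl((k/n)^{\beta/\alpha}\bigr)$ and does not justify its vanishing under \eqref{kn} alone, so this is a gap shared with the original rather than a flaw peculiar to your argument. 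However, your parenthetical that this restriction is ``implicit in the hypothesis on the second-order parameter $\beta$'' is not correct: it is a genuine additional condition on the sequence $k(n)$, not a consequence of the tail assumption, and should be stated as such.
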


I am indebted to John Einmahl for having pointed out an error in an early formulation of (\ref{eq:20150329:002}).

\begin{proof}
In order to deduce the inverse function of $\barF(x)$, we rewrite it as, as $x\to\infty$,
$$
\barF(x) = \left(C^{1/\alpha} x^{-1}\exp\left(\alpha^{-1} o(x^{-\beta})\right)\right)^{\alpha}.
$$
Then, the inverse function of $\barF(x)$ looks like, as $t\to0^+$,
$$
{\barF}^{\,-1}(t) = \big(Ct^{-1}\exp\big(\alpha\,g(t)\big)\big)^{1/\alpha},
$$
where $g$ is a function to be identified.
To this aim, introducing $x={\barF}^{\,-1}(t)$ in the last expression of $\barF(x)$, we have
$$ 
t = \Big[C^{1/\alpha} \Big\{\big(Ct^{-1}\exp\big(\alpha\,g(t)\big)\big)^{1/\alpha}\Big\}^{-1}\exp\Big\{\alpha^{-1} o\Big(\big(\big(Ct^{-1}\exp\big(\alpha\,g(t)\big)\big)^{1/\alpha}\big)^{-\beta}\Big)\Big\}\Big]^{\alpha}.
$$
Then, applying the logarithm function gives, as $t\to0^+$,
$$
0=-g(t)+o\Big(t^{\beta/\alpha}\exp\big(\beta g(t)\big)\Big).
$$
This necessarily implies that $g(t)\to0$ as $t\to0^+$, and we then deduce that $g(t)=o(t^{\beta/\alpha})$.
Hence, 
as $t\to0^+$,
%
\begin{equation}\label{eq:20150325:003}
\barF^{\,-1}(t)=C^{1/\alpha}t^{-1/\alpha}e^{o(t^{\beta/\alpha})}\textrm{.}
\end{equation}
Let $E_1$, \ldots, $E_n$ be independent standard exponential r.v.s.
Define
$$
S_{i}=\sum_{j=1}^{i}\frac{E_{n-j+1}}{n-j+1}
=\sum_{j=1}^{i}\frac{E_{n-j+1}-1}{n-j+1}+\log\left(\frac{n}{i}\right)+O(i^{-1})\textrm{.}
$$
Then, by R\'{e}nyi's representation of order statistics (see \cite{Renyi1953} and e.g. \cite{David}, page 21), we may write 
$X_{n-k:n}=\barF^{\,-1}\big(e^{-S_{n-k}}\big)$ for $k$ = 1, \ldots, $n-1$.
It now follows from \eqref{eq:20150325:003} that
%
\begin{eqnarray}
\lefteqn{\log(X_{n-k:n})\quad=\quad\frac{1}{\alpha}\,\log C+\frac{1}{\alpha}\,S_{n-k}+o_p(e^{-\beta/\alpha S_{n-k}})} \nonumber \\
 & = & \frac{1}{\alpha}\,\log C+\frac{1}{\alpha}\,\sum_{j=1}^{n-k}\frac{E_{n-j+1}-1}{n-j+1}+\frac{1}{\alpha}\,\log\left(\frac{n}{k}\right) 
+\frac{1}{\alpha}\,O(k^{-1})
+o_p\left(\left(\frac{k}{n}\right)^{\beta/\alpha}\left(1+O(k^{-1})\right)^{\beta/\alpha}\right) 
\textrm{.}\label{eq:20150329:020}
\end{eqnarray}

The mean of $\sqrt{k}\alpha^{-1}\sum_{j=1}^{n-k}\left(E_{n-j+1}-1\right)\big/(n-j+1)$ is 0 and its variance
$$
k\alpha^{-2}\sum_{j=k+1}^n\frac{1}{j^2}\sim\alpha^{-2}\textrm{\quad as\quad $n\to\infty$,}
$$
since
$$
-\frac{1}{n+1}+\frac{1}{k+1}\ =\ \int_{k+1}^{n+1}\frac{dx}{x^2}\ \leq\ \sum_{j=k+1}^n\frac{1}{j^2}\ \leq\ \frac{1}{(k+1)^2}+\int_{k+1}^{n}\frac{dx}{x^2}\ =\ \frac{1}{k+1}\left(1+\frac{1}{k+1}\right).
$$

Hence, from \eqref{eq:20150329:020} we have that $\sqrt{k}\,\big(\log\big(X_{n-k:n}\big)-\alpha^{-1}\,\log\big(n\big/k\big)-\alpha^{-1}\,\log C\big)$ is asymptotically $\mathcal{N}\big(0,\alpha^{2}\big)$, and \eqref{eq:20150329:002} then follows.
\end{proof}

\begin{rmq}\label{rmqX1}
For a given $C>0$, one can compute the optimal convergence rate of $\hat{\alpha}$ in \eqref{eq:20150329:002}.
By differentiating $\sqrt{k}\,\big(\log\big(n\big/k\big)+\log(C)\big)$ with respect to $k$ and straightforward computations, one obtains that this optimal rate is of type $C^{-1/2}\,n^{-1/2}$.
Surprisingly this result seems to contradict the best attainable rate of convergence for estimates of shape of regular variation given by Hall and Welsh in 1984 (see \cite{HallWelsh1984}), which was established to be higher than $n^{-1/2}$.

This apparent contradiction motivated our interest in the analysis of reasons to get a rate that is not allowed according to \cite{HallWelsh1984}.
That study presented in \cite{NN2015HallWelsh} shows that the best attainable rate of convergence given by Hall and Welsh can be improved, allowing rates lower than $n^{-1/2}$.
So, such contradiction does not exist.

\end{rmq}

\begin{rmq}\label{rmqX2}
Nevertheless the rate of convergence of $\hat{\alpha}$ is lower than those of well-known estimators of $\alpha$, showing thus a better performance than its competitors, its dependence on $C$ could deteriorate its performance, for instance if $C$ takes values lower than 1.
But fortunately eventual deteriorations in the rate would be mitigated by large sample sizes.
However, if $C<1$, a bad performance could be evidenced in practice since large sample sizes are not often available.

\end{rmq}

%
%
%

\section{Simulation study and real applications}
\label{applications}

In this section we provide simulated and real examples to illustrate how the proposed estimator $\hat{\alpha}$ (see \eqref{eq:20150316:010bis}) works.
Also, comparisons of this estimator with some existing ones are shown.

Among other tail index estimators to be applied in this paper are those given by Hill 
and Dekkers, Einmahl, and de Haan.
Nice surveys on these estimators and their properties can be found in de Haan and Ferreira \cite{deHaanFerreira} and Embrechts et al. \cite{embrechts1997}.

For what follows, let $X_1$, \ldots, $X_n$ a sample of i.i.d. r.v.s following a common distribution $F$, and $X_{1:n}\leq\cdots\leq X_{n:n}$ its order statistics.
In order to compare our estimator with some existing ones, we assume that the tail of $\barF$ is RV with  tail index $\alpha$ ($\alpha>0$).

The first well-known estimator to be considered in this section is the estimator of Hill \cite{Hill1975} given in 1975 and defined by
%
$$
\hat{\alpha}_H=\frac{1}{k}\sum_{i=0}^{k-1}\log\left(X_{n-i:n}\right)-\log\left(X_{n-k:n}\right),
$$
%
and the second estimator is the moment estimator given by Dekkers, Einmahl, and de Haan \cite{DekkersEinmahldeHaan1989} in 1989, which is defined by
%
$$
\hat{\alpha}_{M}=M_n^{(1)}+1-\frac{1}{2}\left(1-\frac{\left(M_n^{(1)}\right)^2}{M_n^{(2)}}\right)^{-1}\textrm{,}
$$
%
where $\displaystyle M_n^{(i)}=k^{-1}\sum_{i=0}^{k-1}\big(\log X_{n-i:n}-\log X_{n-k:n}\big)^j$.


A typical way for comparing $\hat{\alpha}$, $\hat{\alpha}_H$ and 
$\hat{\alpha}_M$ 
is plotting their estimates against $k$, keeping $n$ fixed.
For the Hill estimator this is usually called the Hill plot.
We use this approach in the following numerical illustrations to analyze the behaviors of this set of estimators of $\alpha$.

\subsection{Simulation study}

In this subsection we analyze the behavior of estimators when some parameters vary.
To this aim, several samples are built using simulations, by varying $\alpha$ and $C$ and the sample size, $n$.
We consider $\alpha$ = 0.1, 1 and 1.5, $C$ = 0.1, 1 and 10, and $n$ = 1 000, 10 000 and 100 000.
Then we simulate pseudorandom samples of size $n$ of the distribution $F_{\alpha,C}(x)=1-C\,x^{-\alpha}$, $x\geq C^{-\alpha}$.

Plots of estimates of $\alpha$ when varying $\alpha$, $C$, and $n$ are shown.
In each plot $C$ and $n$ are fixed and we allow $\alpha$ to vary, denoting $F$ by $F_1$ when $\alpha=0.1$, by $F_2$ when $\alpha=1$ and by $F_3$ when $\alpha=1.5$.
All these plots include reference lines of the values of $\alpha$ to be estimated.

Plots are organized by estimator of $\alpha$, varying $C$ on row and $n$ on column.
Figure \ref{fig01} corresponds to $\hat{\alpha}_H$, Figure \ref{fig01b} to $\hat{\alpha}_M$ 
and Figure \ref{fig01d} to $\hat{\alpha}$.

Then, over all plots is noted that the convergence of estimators is always reached and it is made in general fastly when $\alpha$ = 0.1.
This situation changes when $\alpha$ = 1 or $\alpha$ = 1.5.
In these two cases is observed that often such convergence is not reached, evidencing a bias.
This is sharper with $\hat{\alpha}_M$.

The Hill estimator, $\hat{\alpha}_H$, shows a better performance than the moment estimator.
This performance improves when $\alpha$ decreases or when $C$ increases.
In fact, considering $C$, the better situation is when $C$ = 10.
Observing sample size, it seems that this variable does not influence the performance of this estimator.

The Dekkers, Einmahl, and de Haan estimator, $\hat{\alpha}_M$, shows biases in its convergence, being these sharp when $\alpha$ = 1 or $\alpha$ = 1.5.
The influences of $C$ and $n$ on this estimator are not clear.


The new estimator, $\hat{\alpha}$, shows the best performance with respect to the other estimators presented in this manuscript.
Moreover, almost always its convergence is reached and is the fastest, without apparently evidencing any bias.
The worst situation is when $C=0.1$ (for $n$ = 1 000 and $n$ = 10 000 and observing $\hat{\alpha}$ for $k<n\times C$) as expected, which was mentioned in Remark \ref{rmqX2}.
But, as was also mentioned there, this issue would be mitigated with large sample sizes.
This is observed when $n$ = 100 000, where a sharp convergence is evidenced.
Also expected, mentioned in Remark \ref{rmqX2}, values of $C$ over 1 would promote faster convergences.
This is observed when $C$ = 10, evidencing in these cases the fastest convergence of the new estimator.
Also, it is appreciated that the increase of $n$ improves the convergence of this estimator.

Note that $\hat{\alpha}(k)=0$ for $k=n\times C$ (supposed integer).
We see that the behavior of this estimator near to $n\times C$ is highly unstable.
However, it is amazing the following fact among the plots of $\hat{\alpha}$ when $C$ = 0.1: for $n\times C<k<n$ with $k$ near to $n$, $\hat{\alpha}$ seems to converge to $\alpha$.
This means that to give an estimate of $\alpha$ using $\hat{\alpha}$, some values of $k$ over $n\times C$ could be used.

From the fact that $\hat{\alpha}(k)=0$ for $k=n\times C$, we deduce that when $C=1$, an unstable behavior of $\hat{\alpha}$ would be expected when $k$ approximates $n$.

\begin{figure}[!ht]
\centering
\subfiguretopcapfalse
\subfigure[$C$ = 0.1, $n$ = 1 000]{
\includegraphics[scale=0.29]{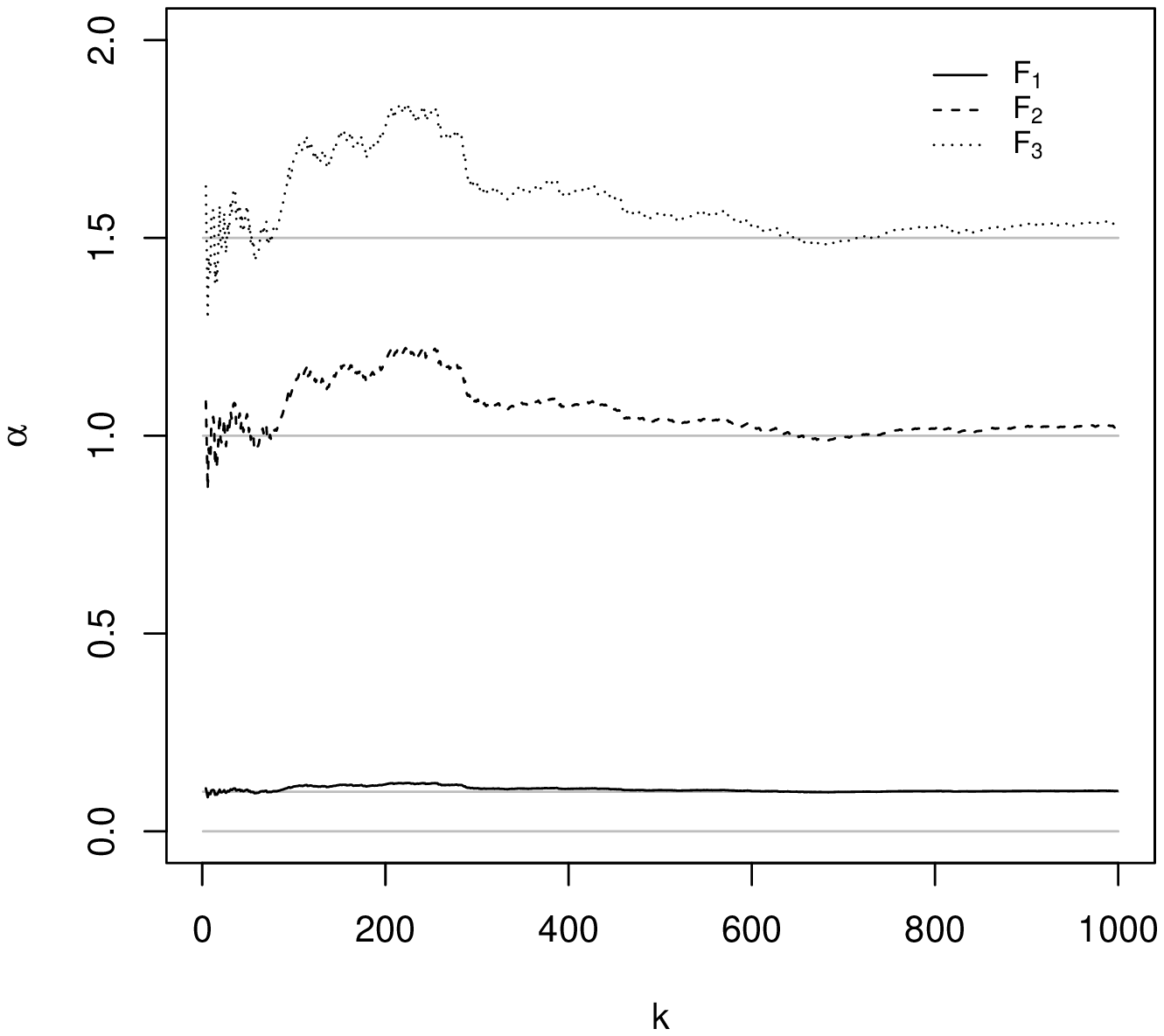}
}
\subfigure[$C$ = 0.1, $n$ = 10 000]{
\includegraphics[scale=0.29]{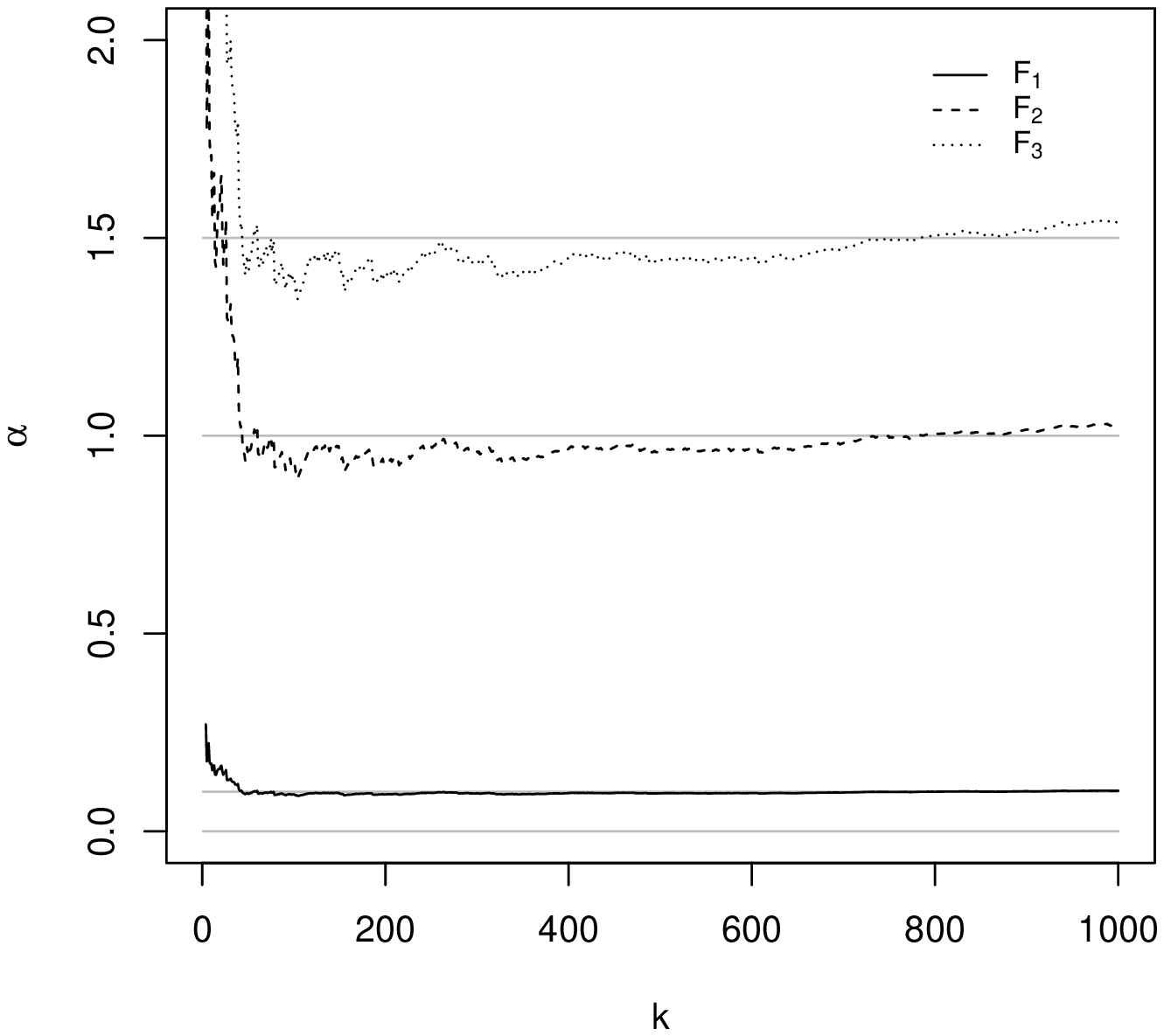}
}
\subfigure[$C$ = 0.1, $n$ = 100 000]{
\includegraphics[scale=0.29]{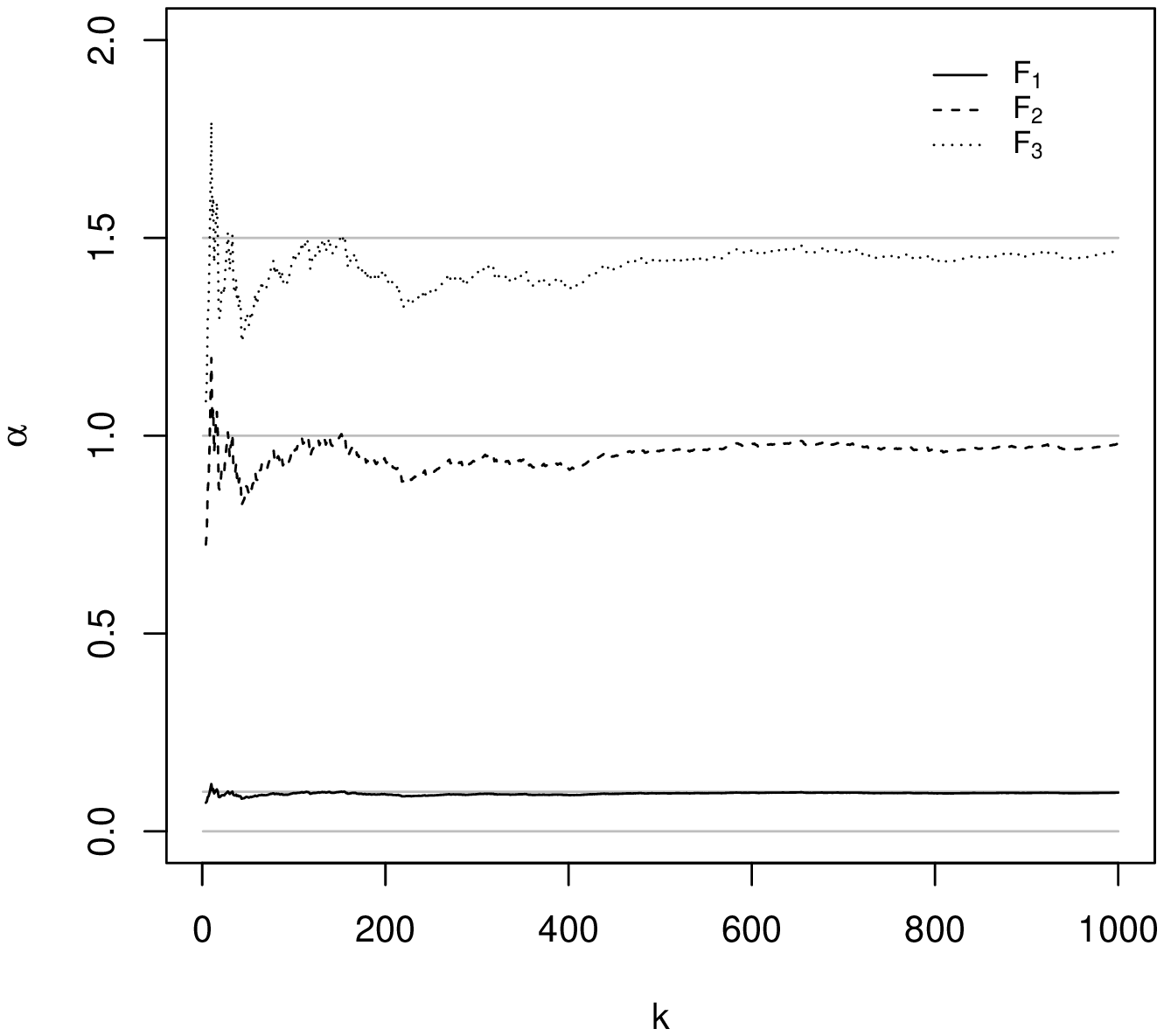}
}
\subfigure[$C$ = 1, $n$ = 1 000]{
\includegraphics[scale=0.29]{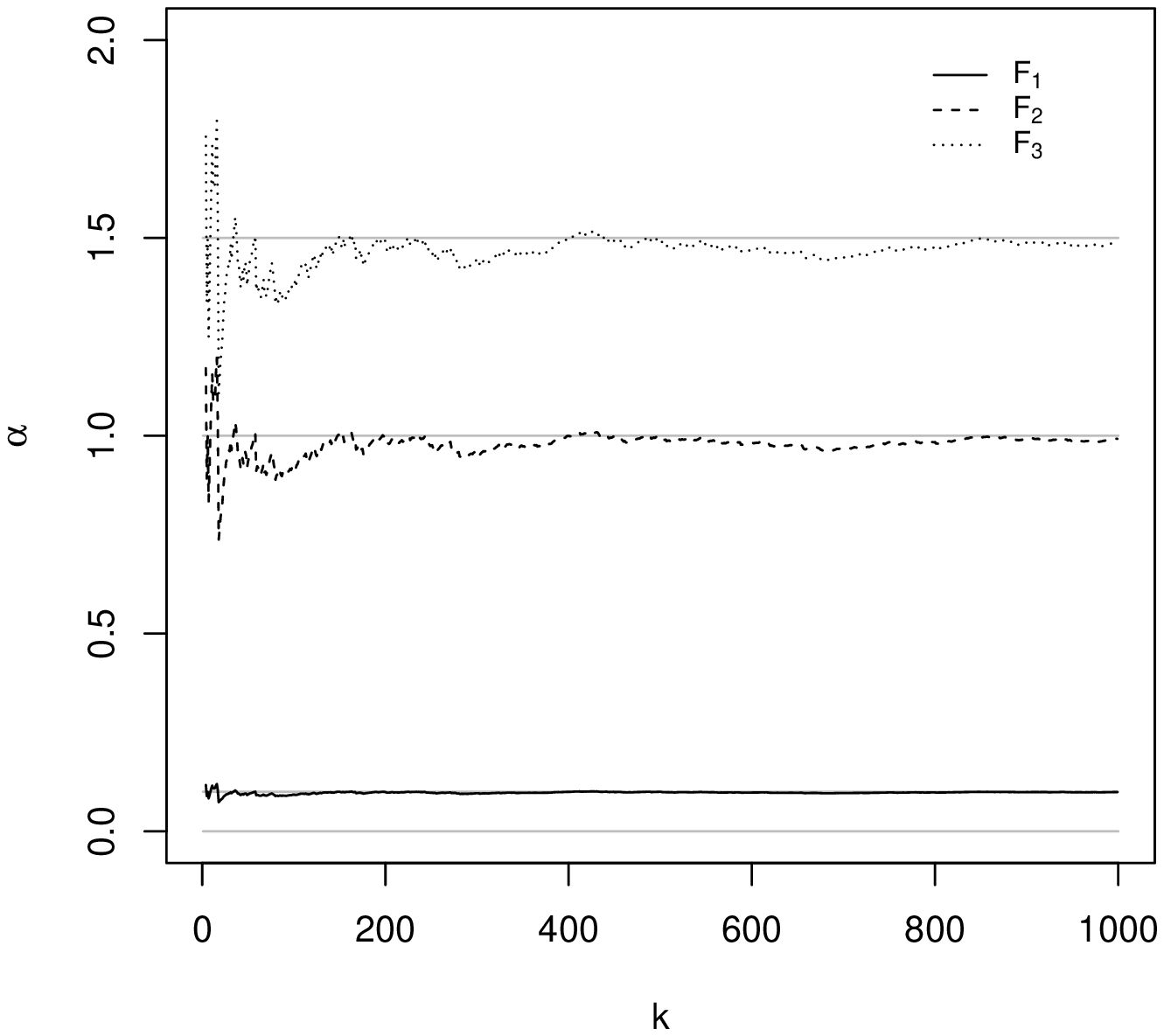}
}
\subfigure[$C$ = 1, $n$ = 10 000]{
\includegraphics[scale=0.29]{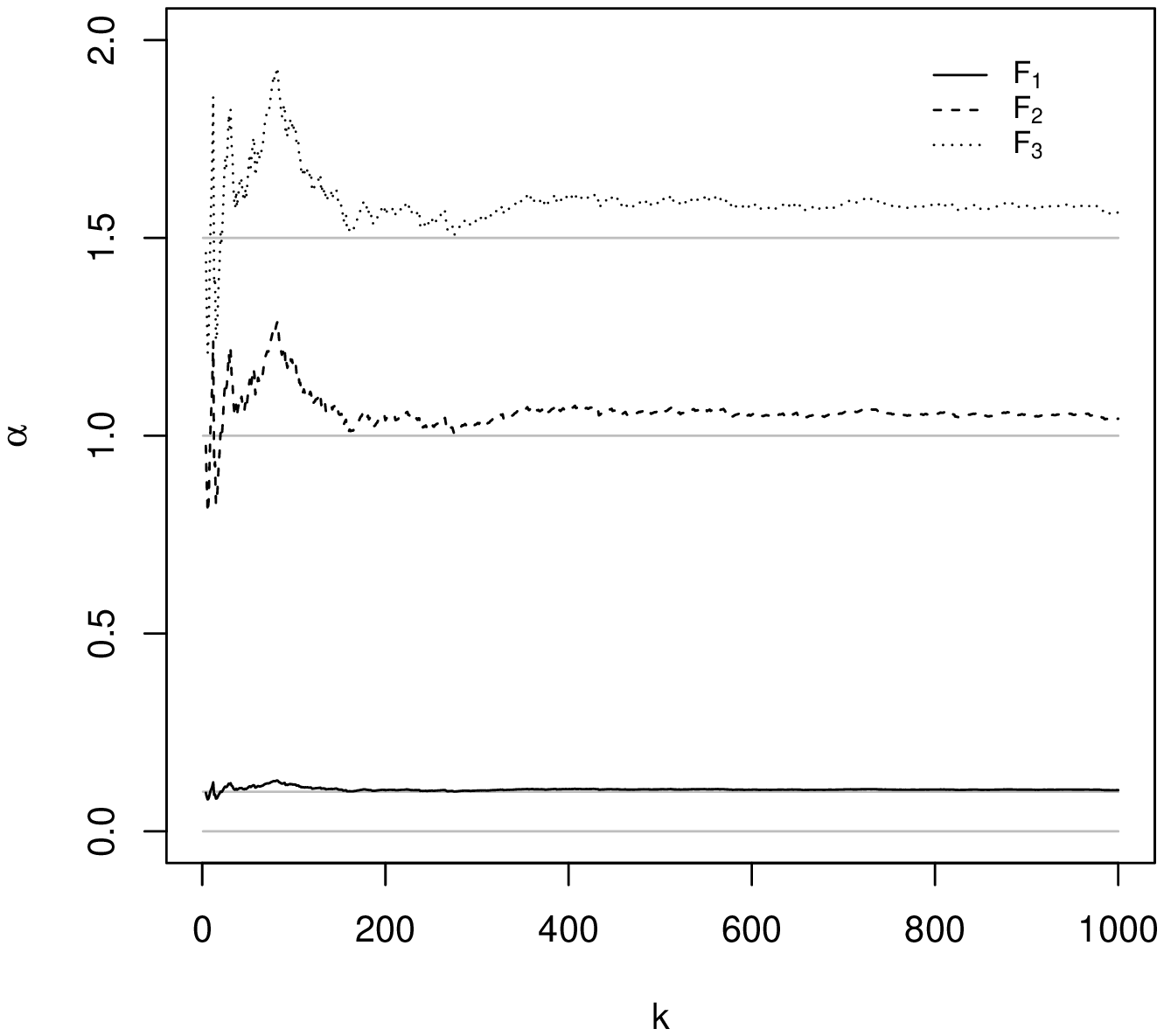}
}
\subfigure[$C$ = 1, $n$ = 100 000]{
\includegraphics[scale=0.29]{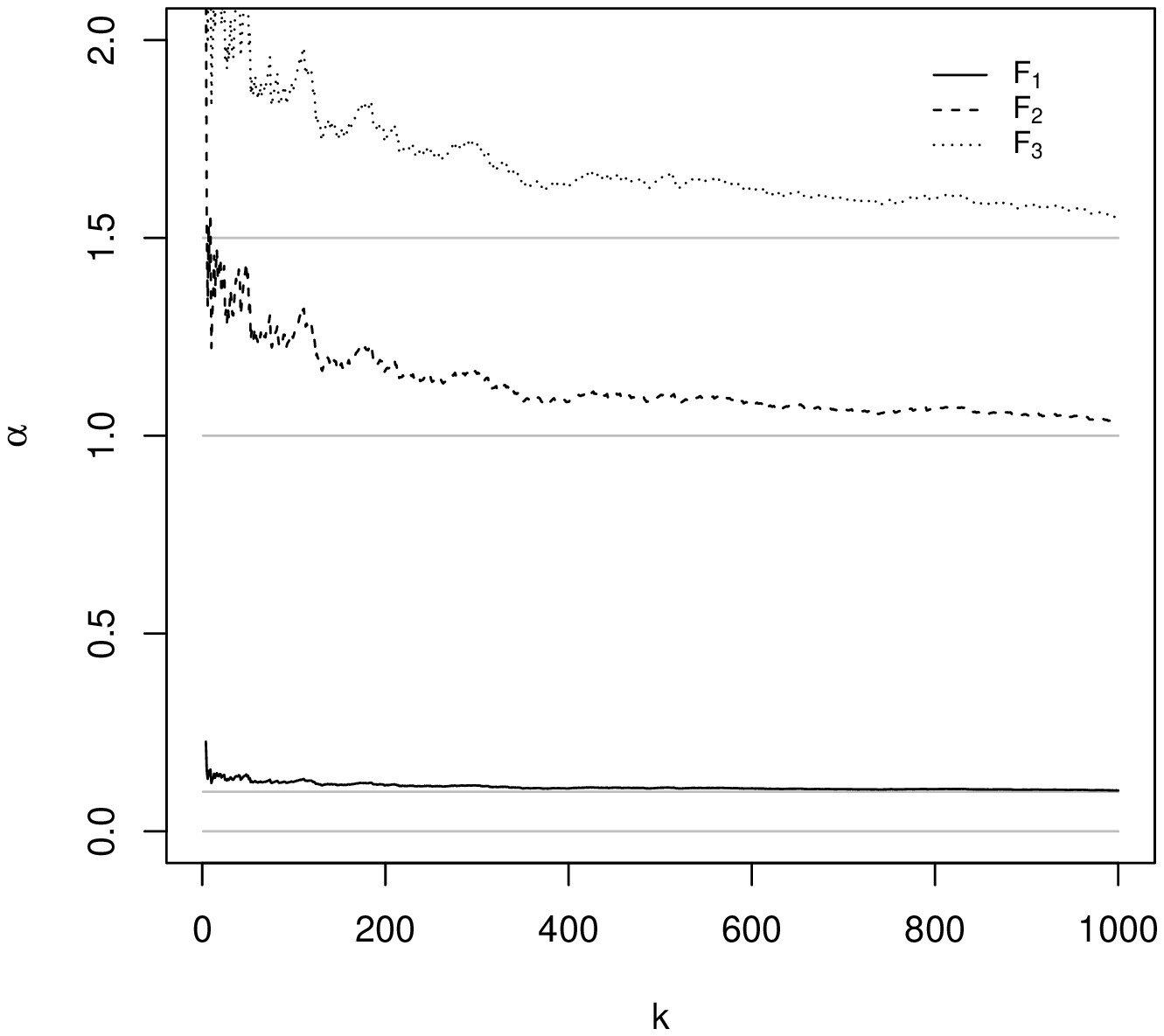}
}
\subfigure[$C$ = 10, $n$ = 1 000]{
\includegraphics[scale=0.29]{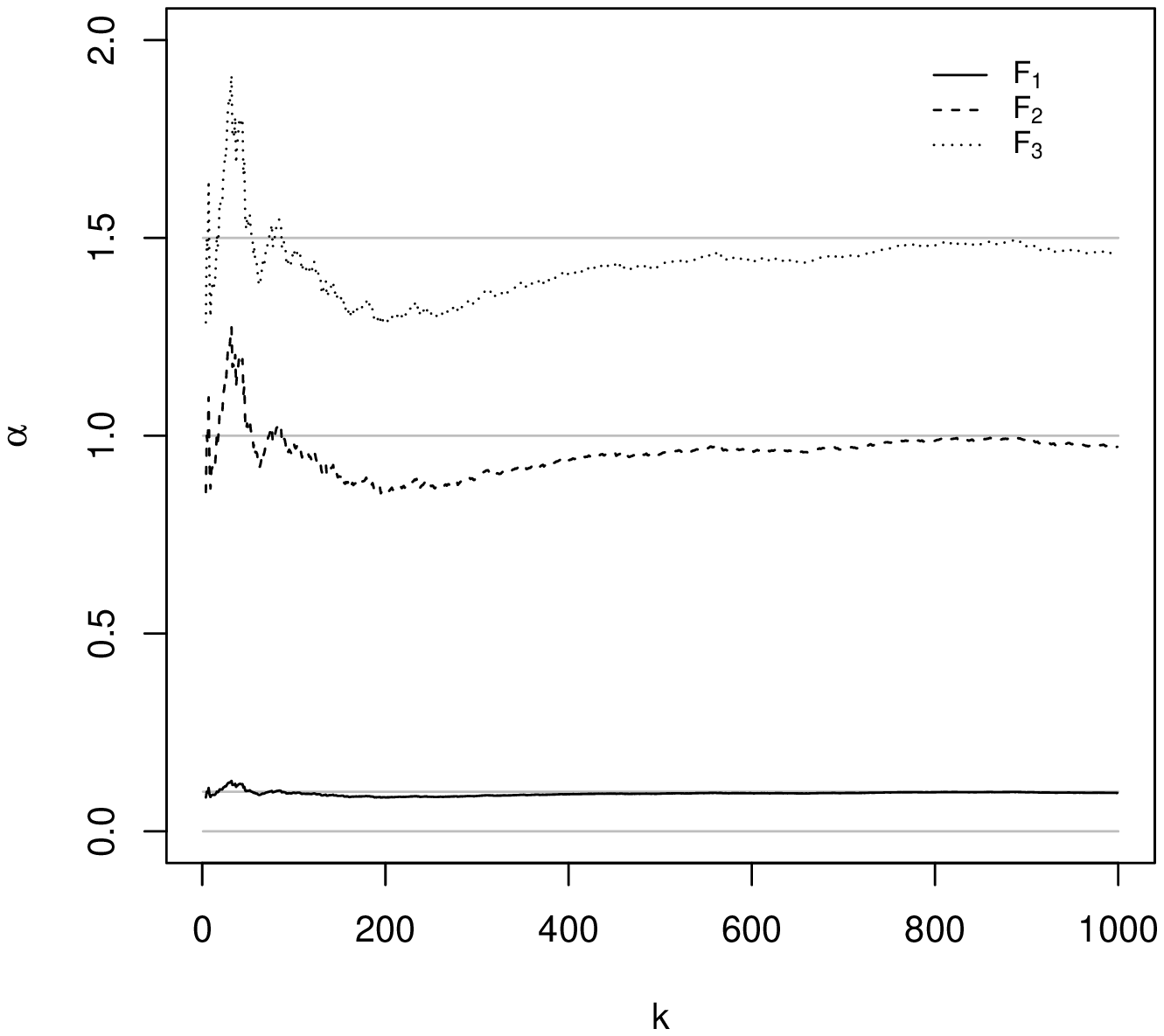}
}
\subfigure[$C$ = 10, $n$ = 10 000]{
\includegraphics[scale=0.29]{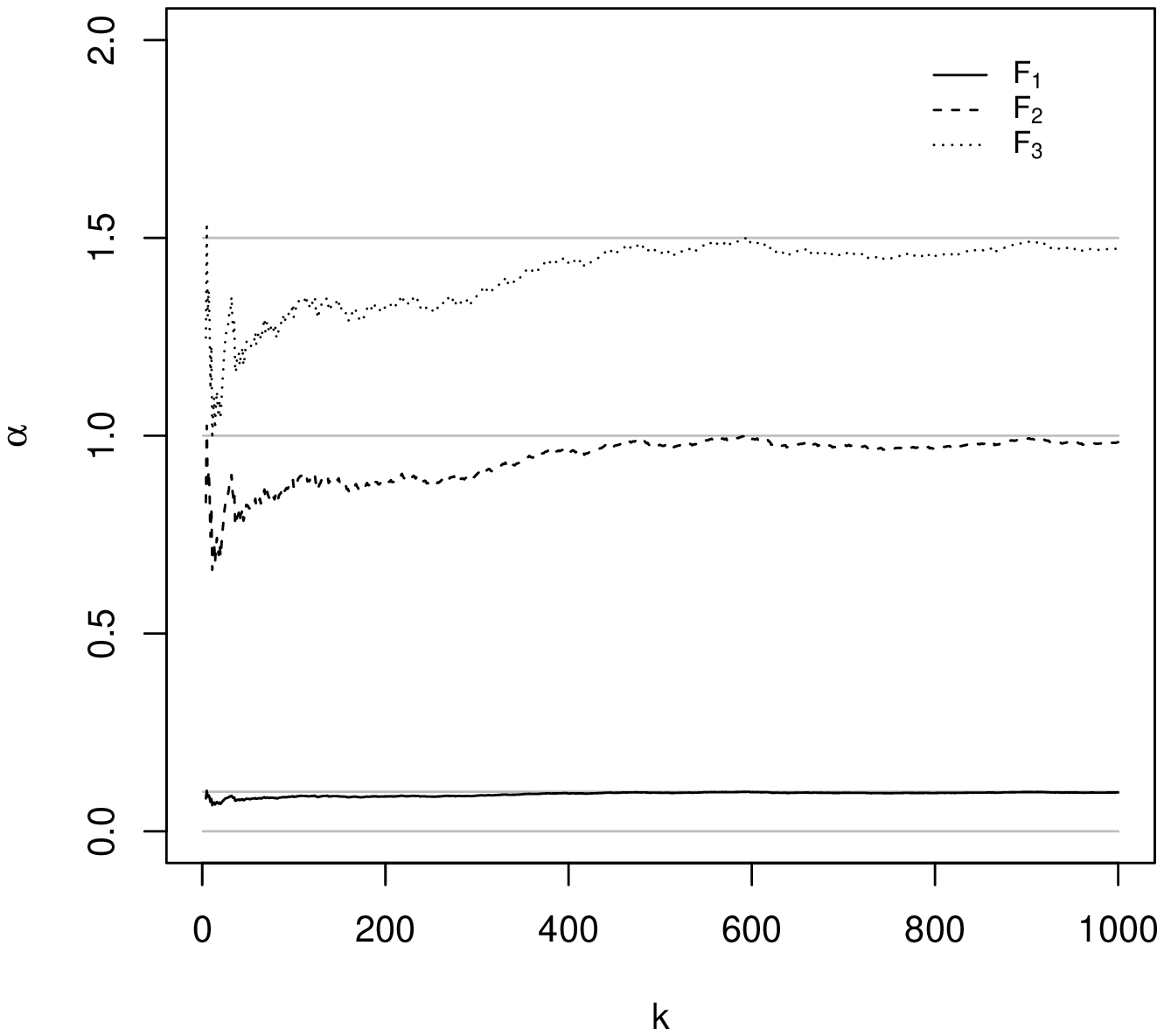}
}
\subfigure[$C$ = 10, $n$ = 100 000]{
\includegraphics[scale=0.29]{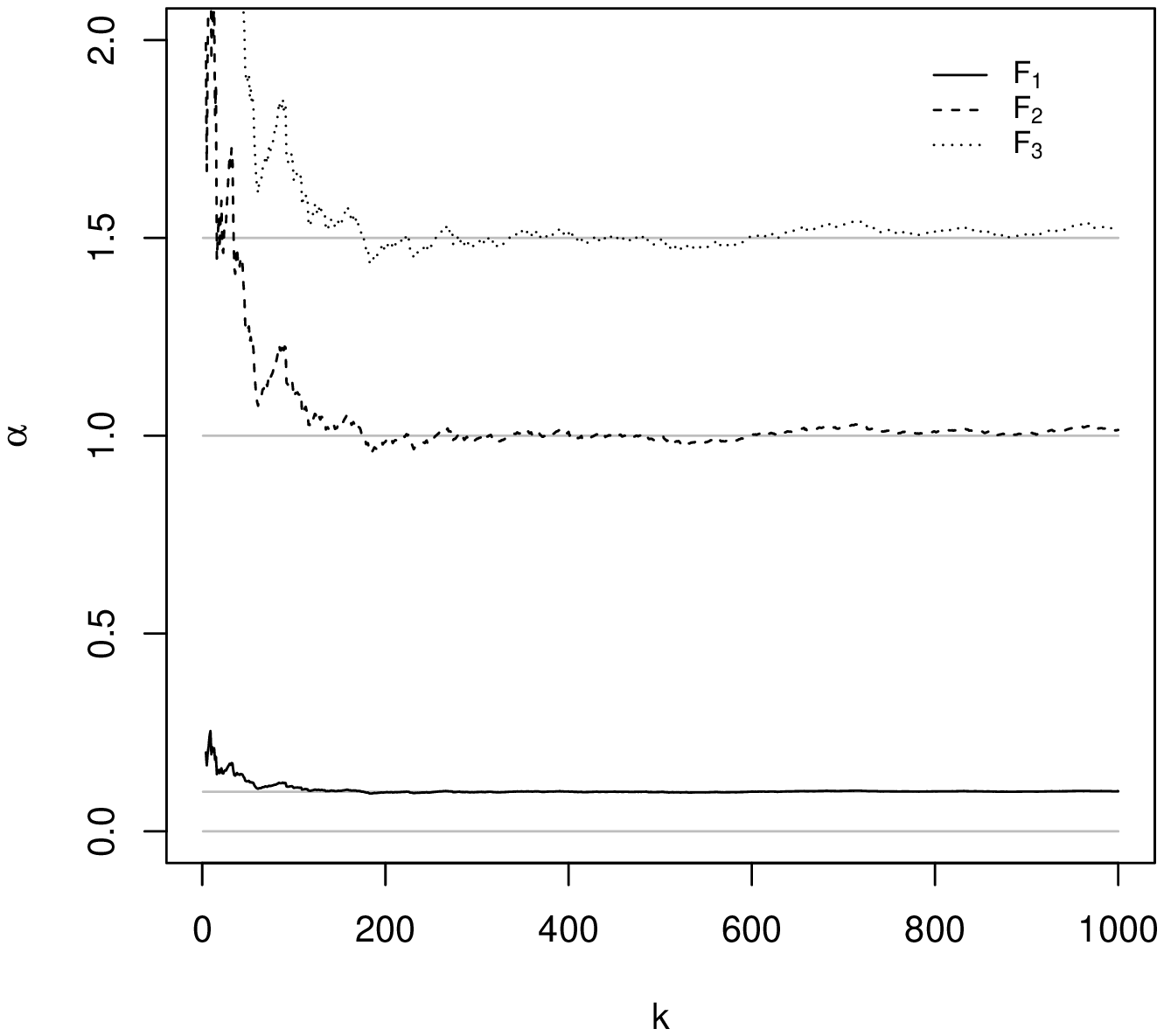}
}
\caption{Plots of $\hat{\alpha}_H$ against $k$, varying $C$ on row and $n$ on column}
\label{fig01}
\end{figure}

\begin{figure}[!ht]
\centering
\subfiguretopcapfalse
\subfigure[$C$ = 0.1, $n$ = 1 000]{
\includegraphics[scale=0.29]{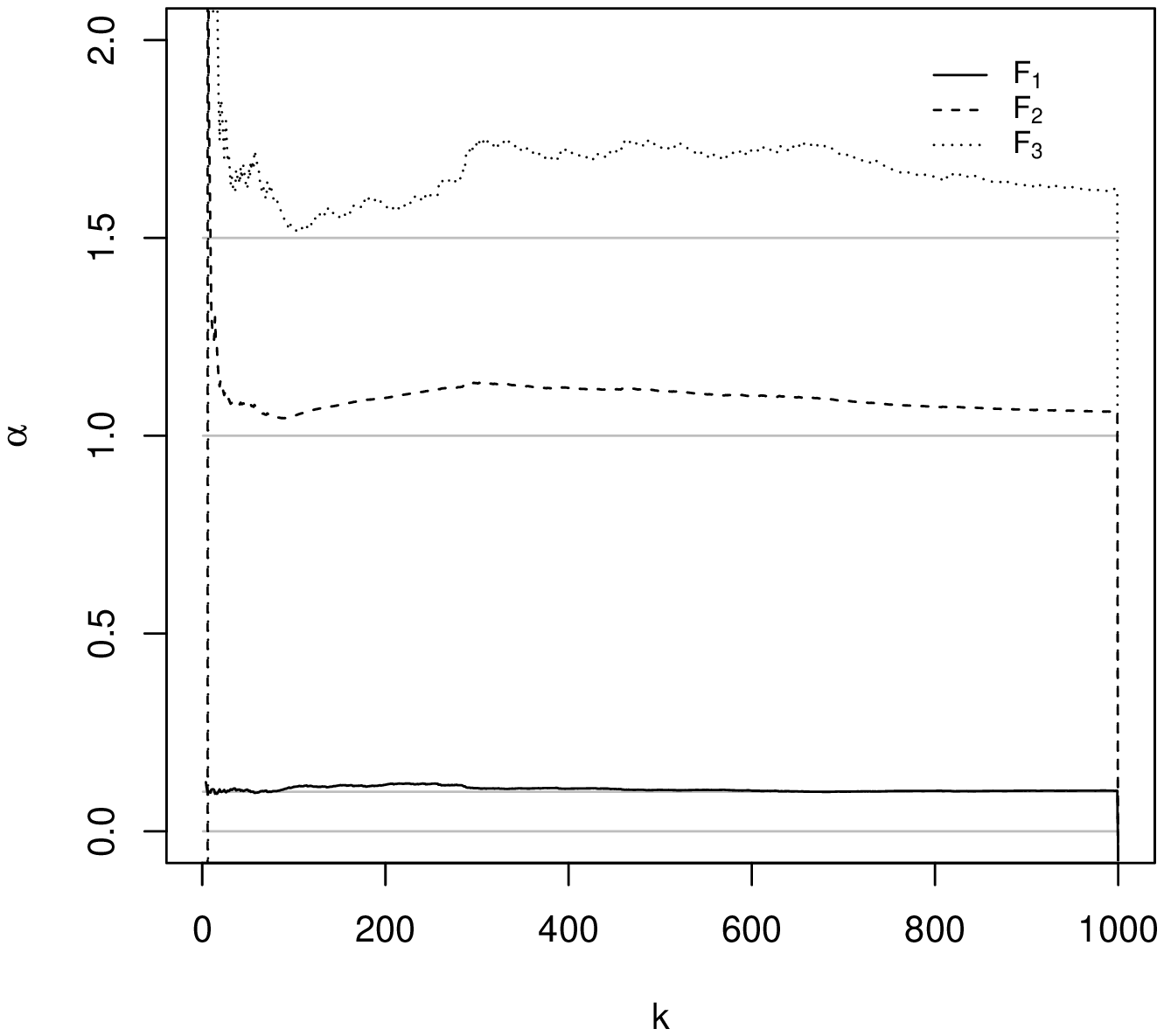}
}
\subfigure[$C$ = 0.1, $n$ = 10 000]{
\includegraphics[scale=0.29]{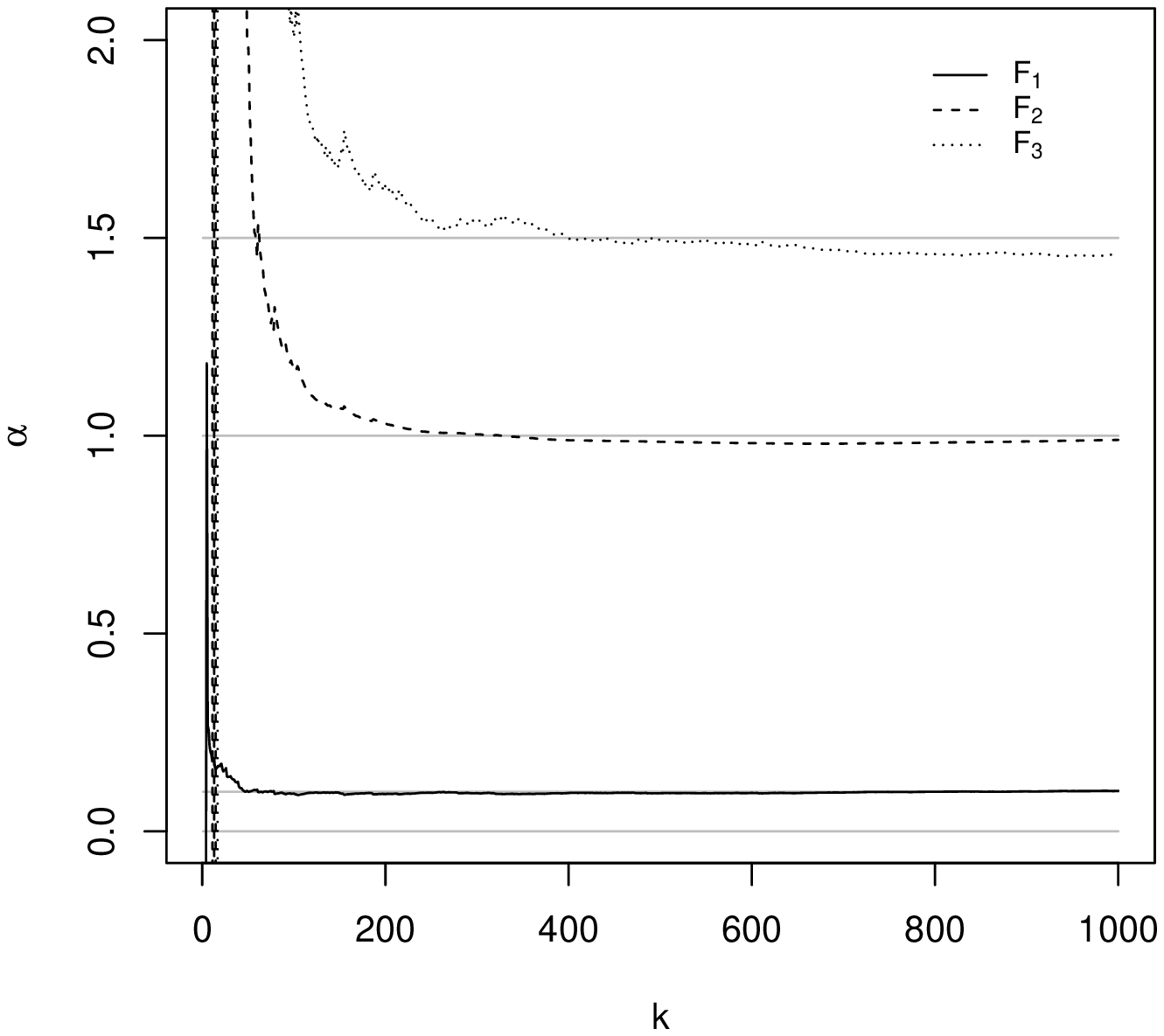}
}
\subfigure[$C$ = 0.1, $n$ = 100 000]{
\includegraphics[scale=0.29]{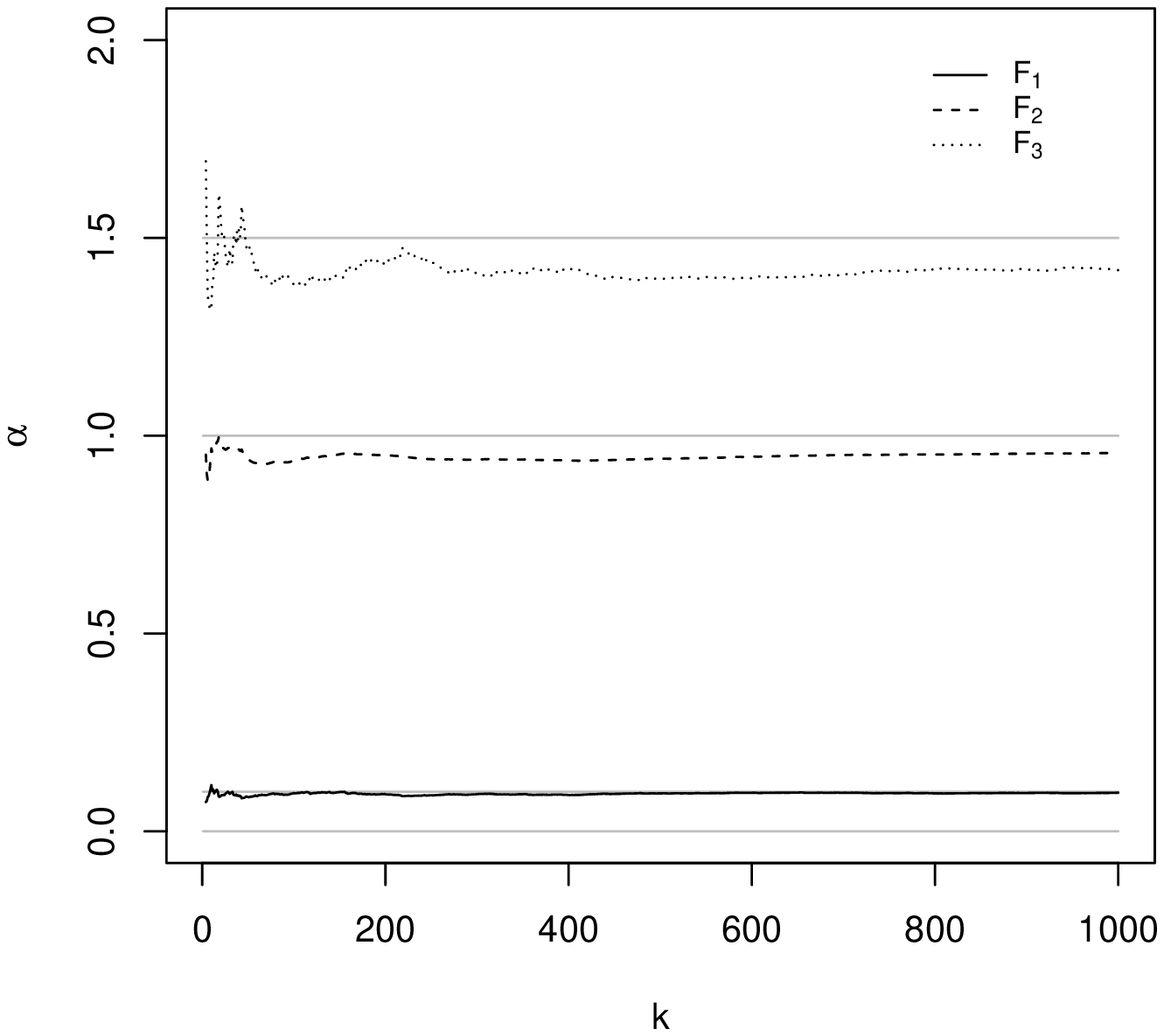}
}
\subfigure[$C$ = 1, $n$ = 1 000]{
\includegraphics[scale=0.29]{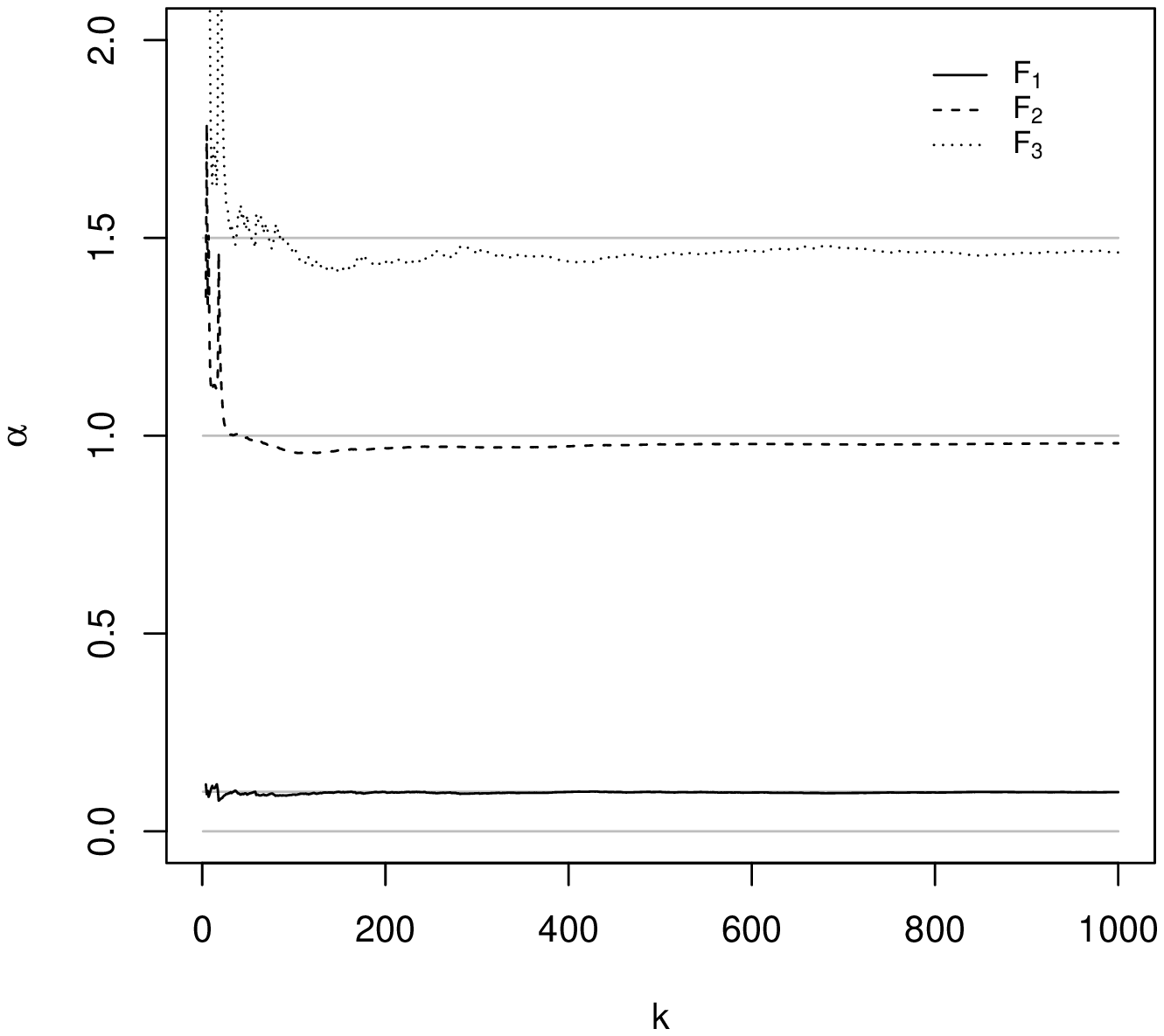}
}
\subfigure[$C$ = 1, $n$ = 10 000]{
\includegraphics[scale=0.29]{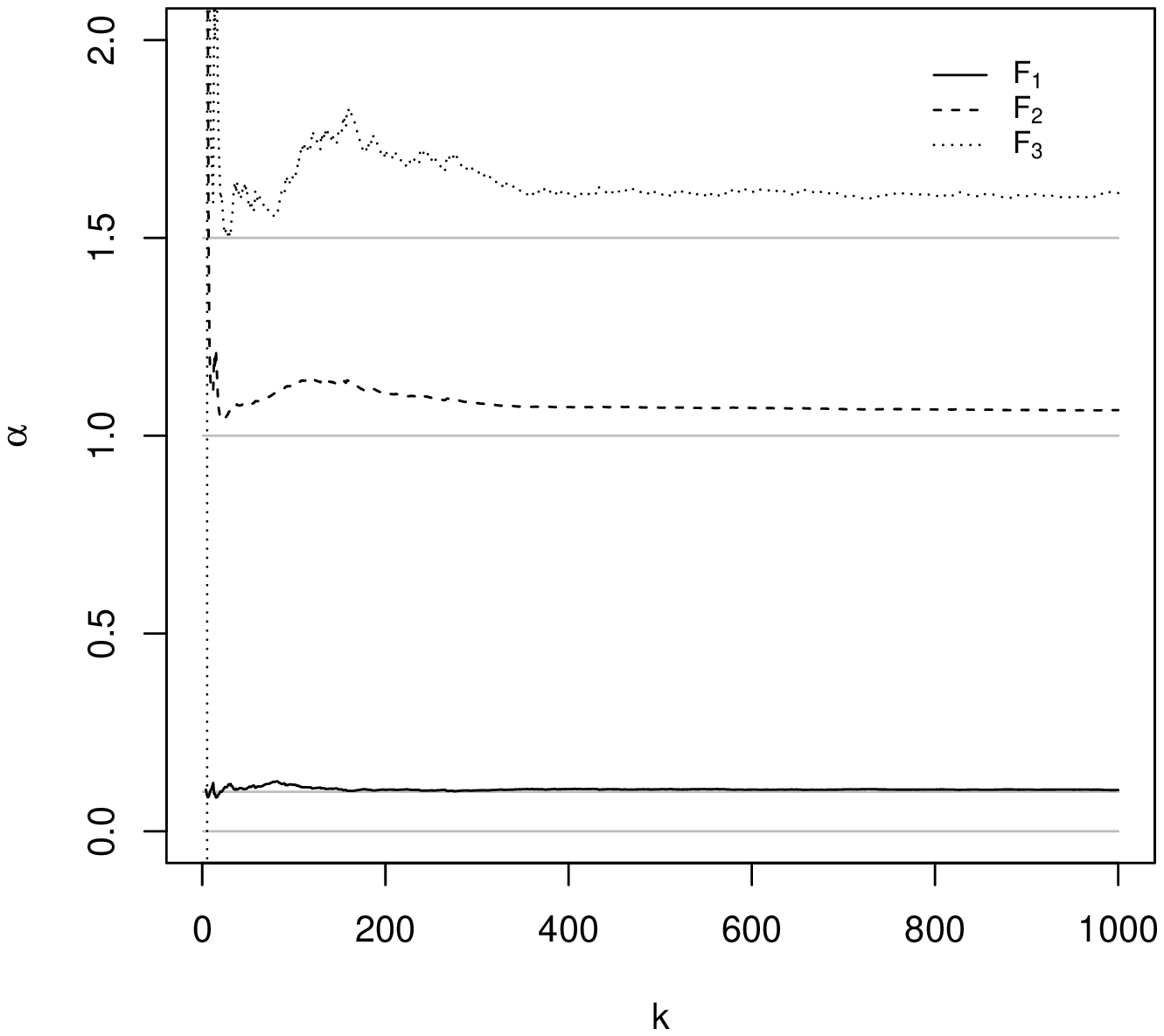}
}
\subfigure[$C$ = 1, $n$ = 100 000]{
\includegraphics[scale=0.29]{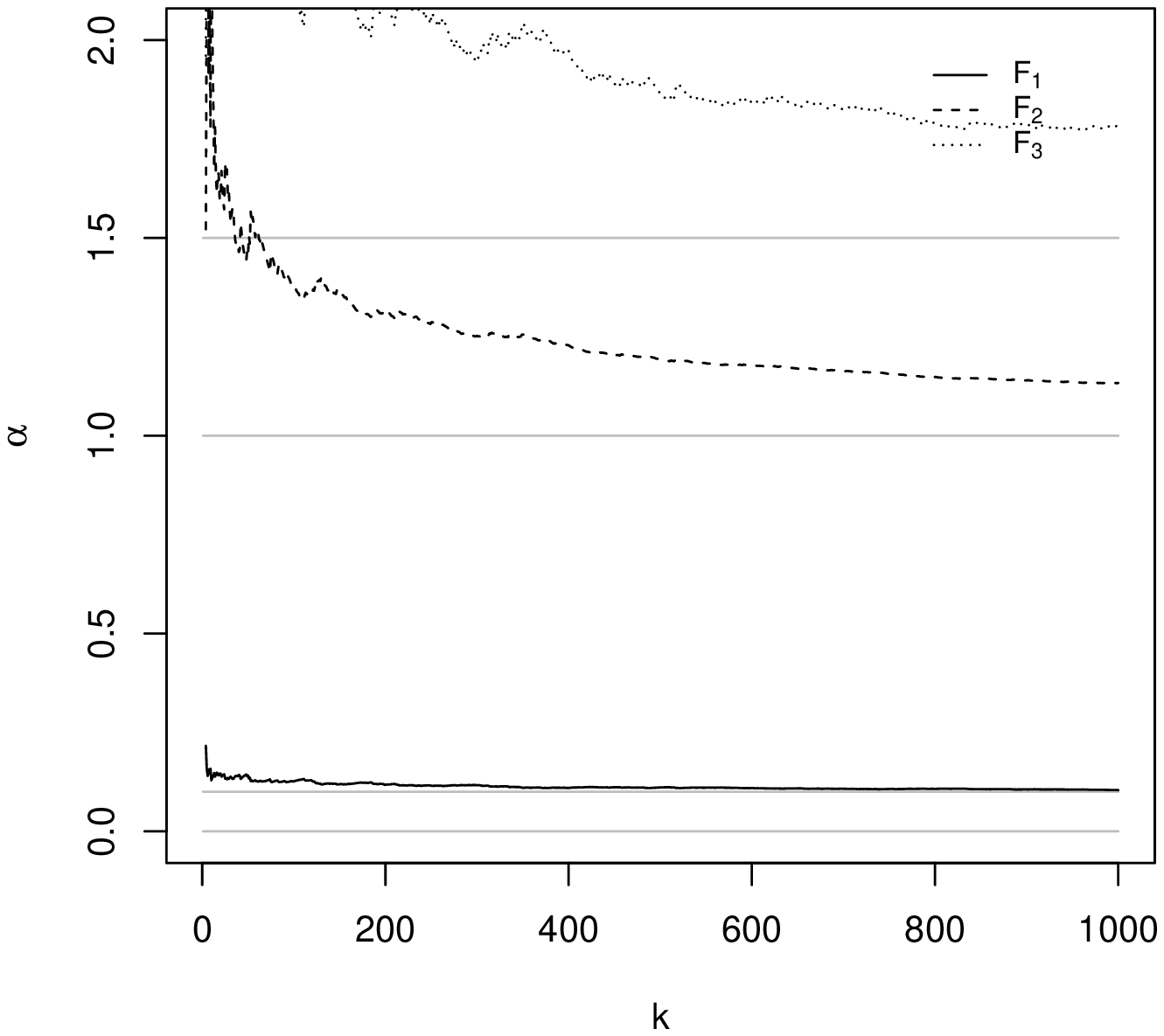}
}
\subfigure[$C$ = 10, $n$ = 1 000]{
\includegraphics[scale=0.29]{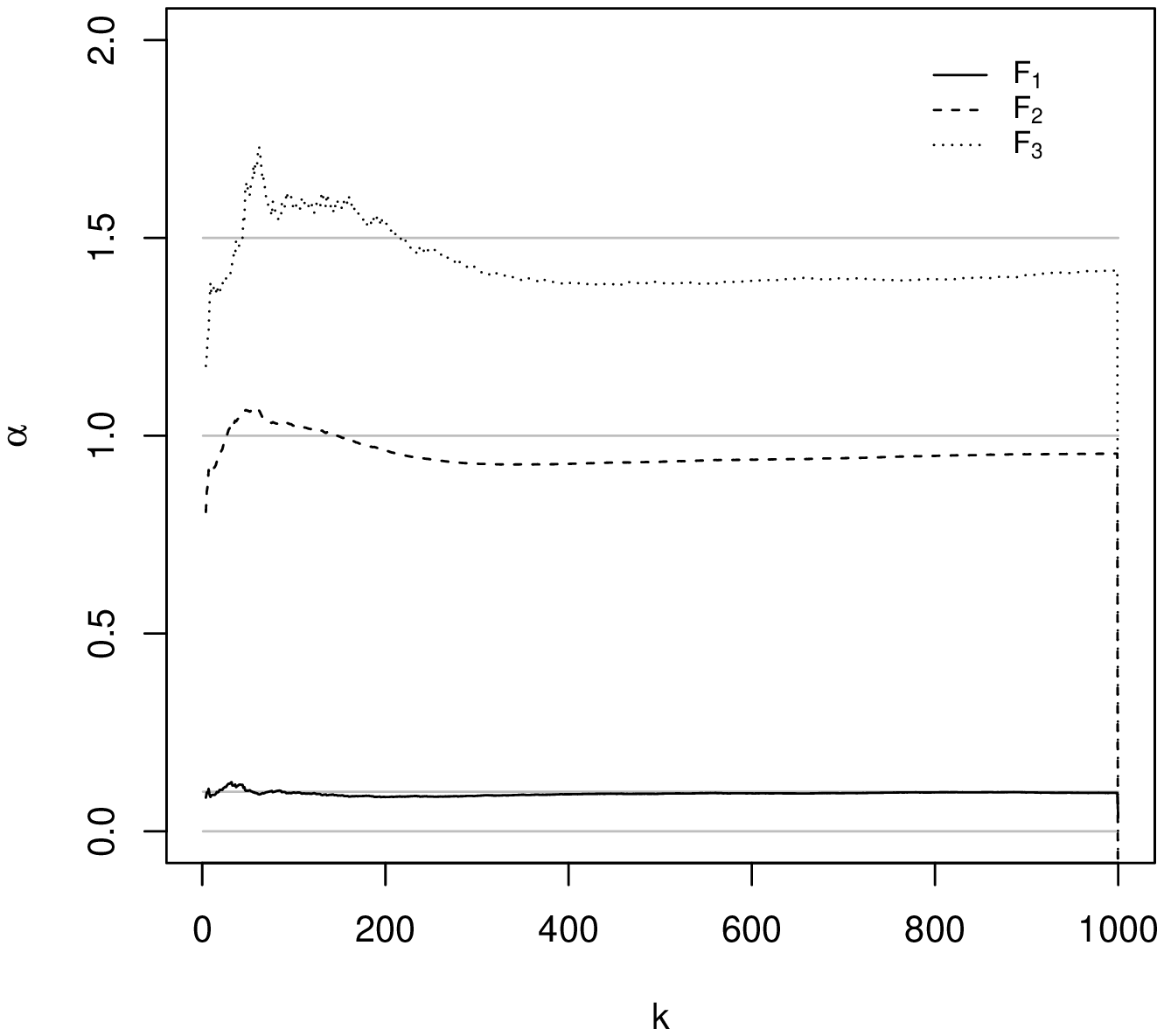}
}
\subfigure[$C$ = 10, $n$ = 10 000]{
\includegraphics[scale=0.29]{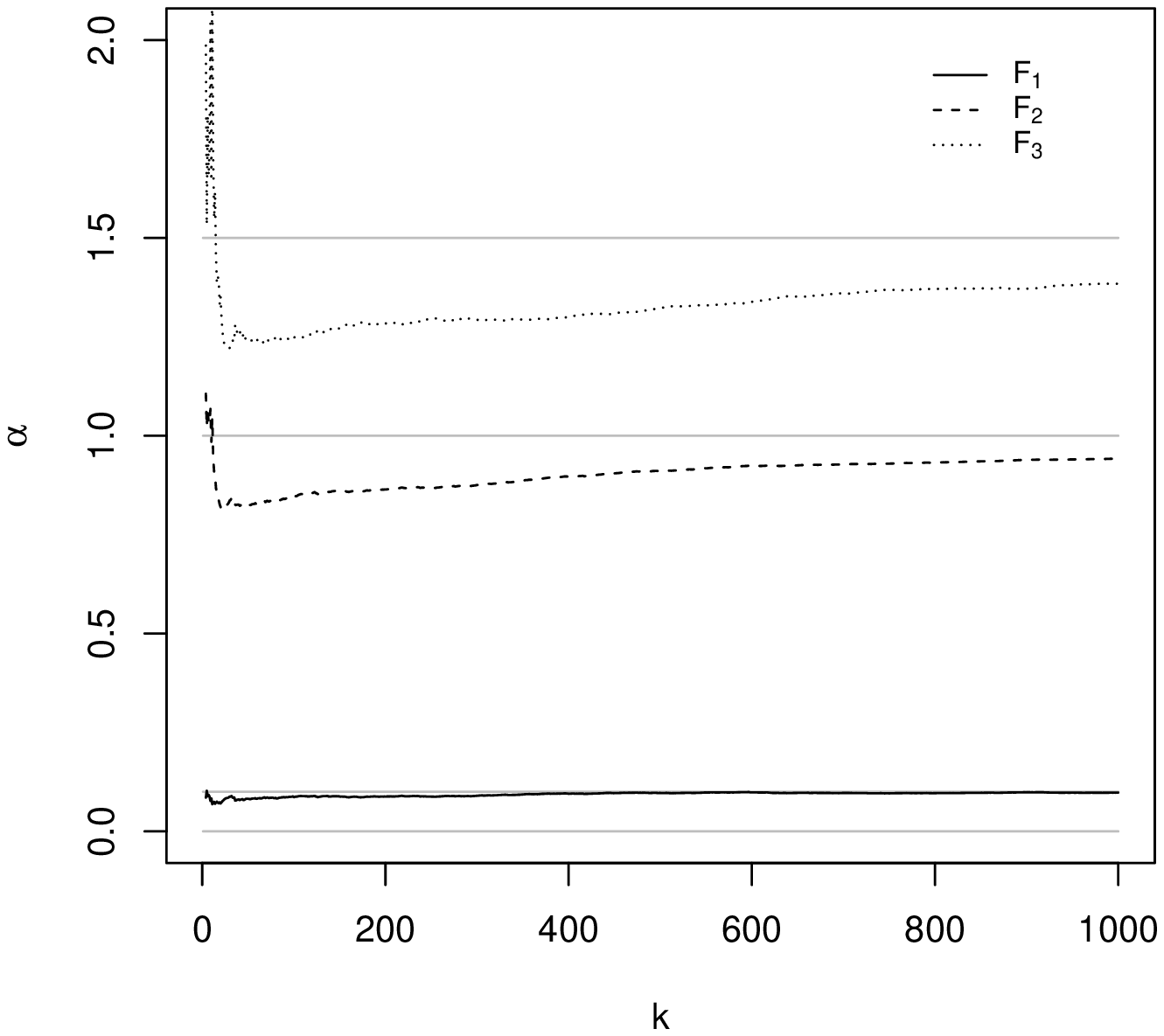}
}
\subfigure[$C$ = 10, $n$ = 100 000]{
\includegraphics[scale=0.29]{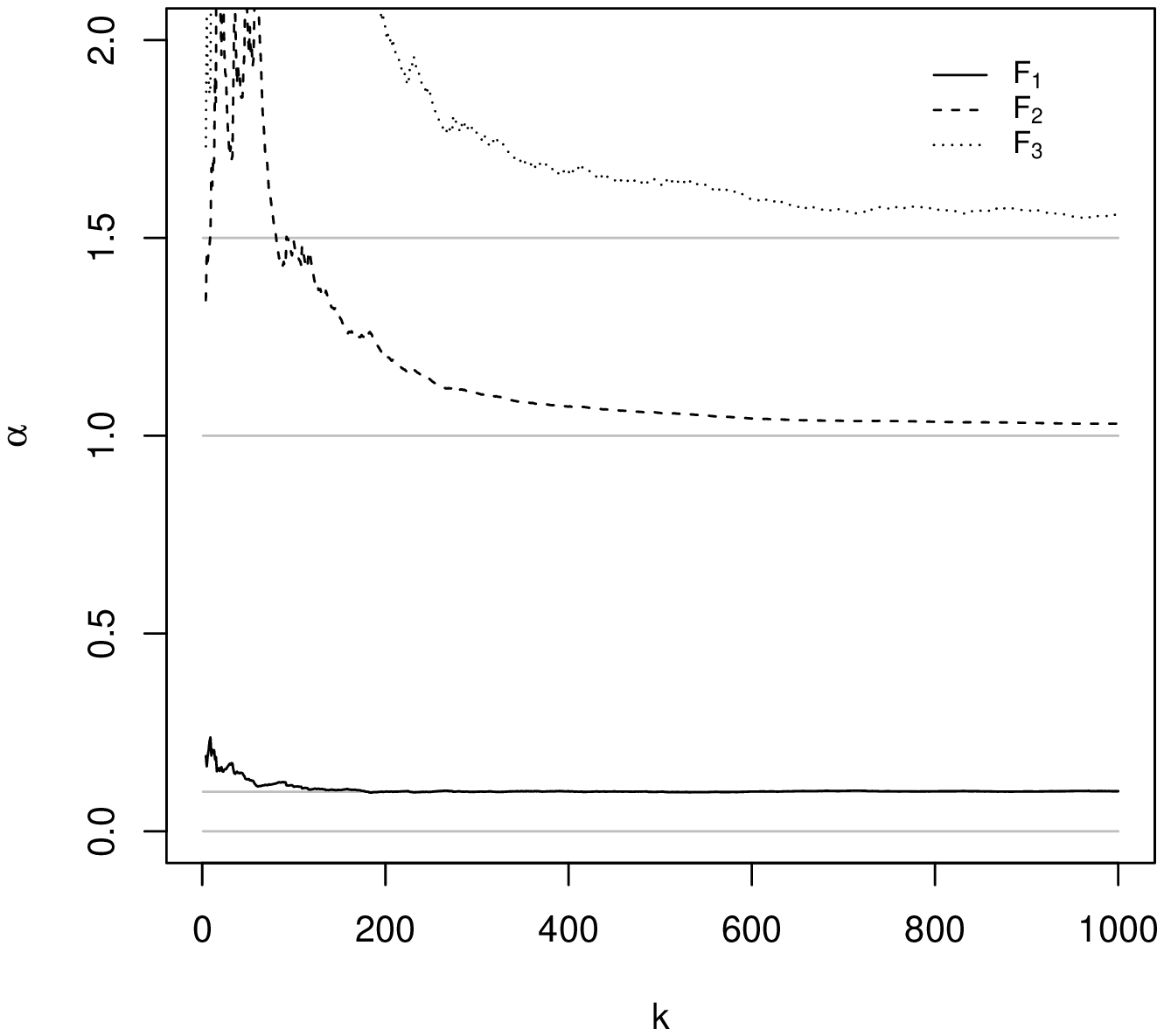}
}
\caption{Plots of $\hat{\alpha}_M$ against $k$, varying $C$ on row and $n$ on column}
\label{fig01b}
\end{figure}

\begin{figure}[!ht]
\centering
\subfiguretopcapfalse
\subfigure[$C$ = 0.1, $n$ = 1 000]{
\includegraphics[scale=0.29]{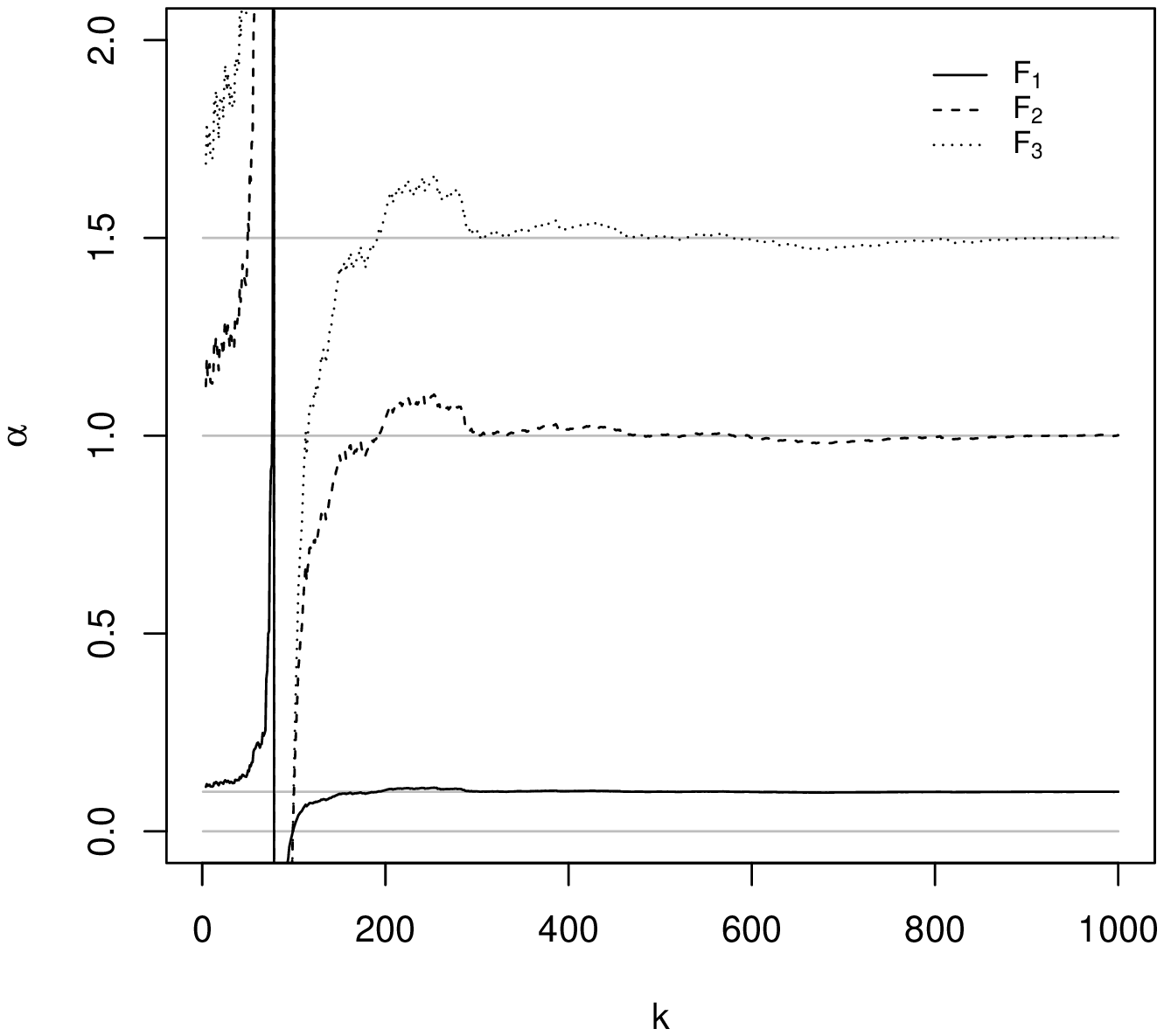}
}
\subfigure[$C$ = 0.1, $n$ = 10 000]{
\includegraphics[scale=0.29]{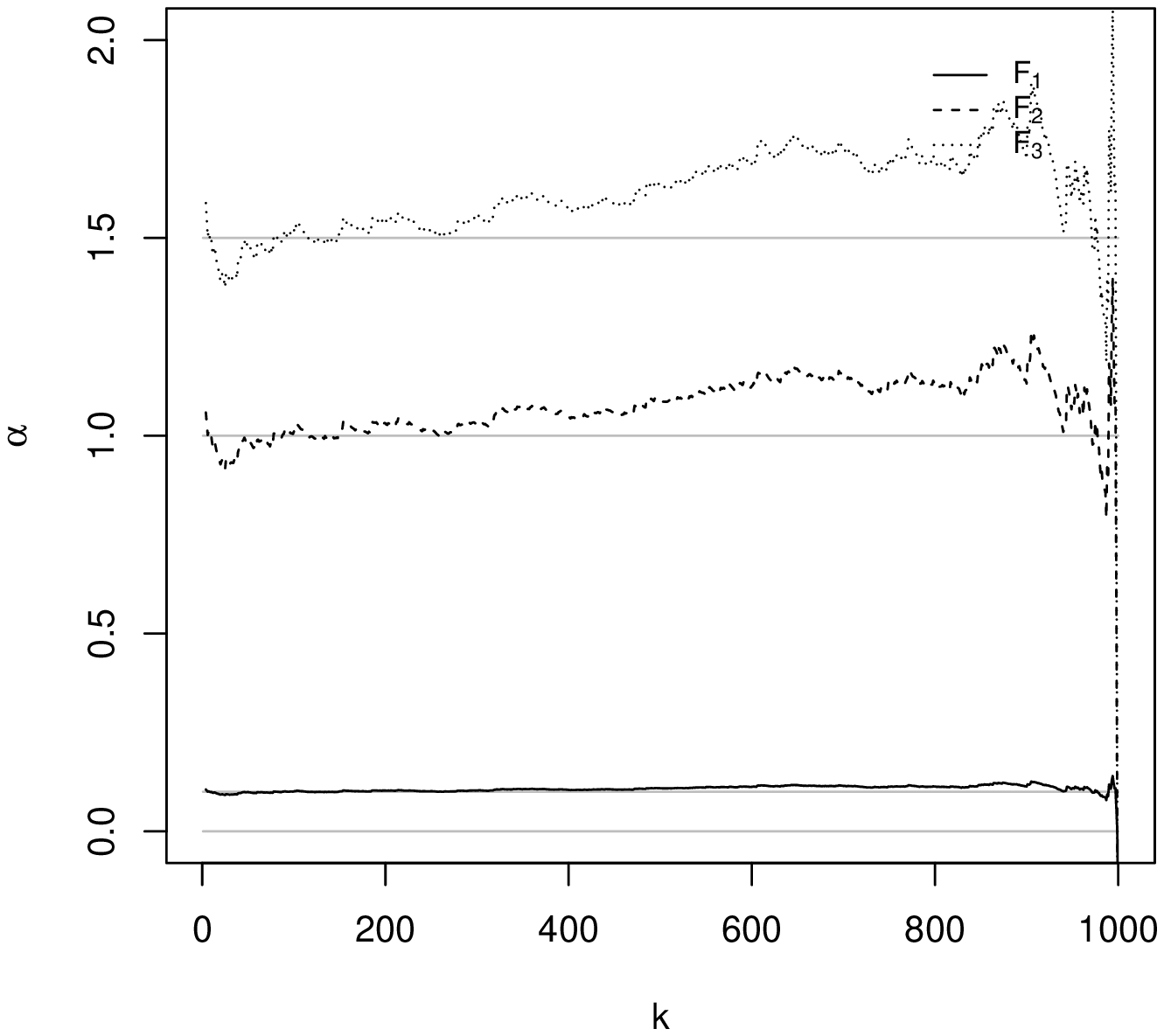}
}
\subfigure[$C$ = 0.1, $n$ = 100 000]{
\includegraphics[scale=0.29]{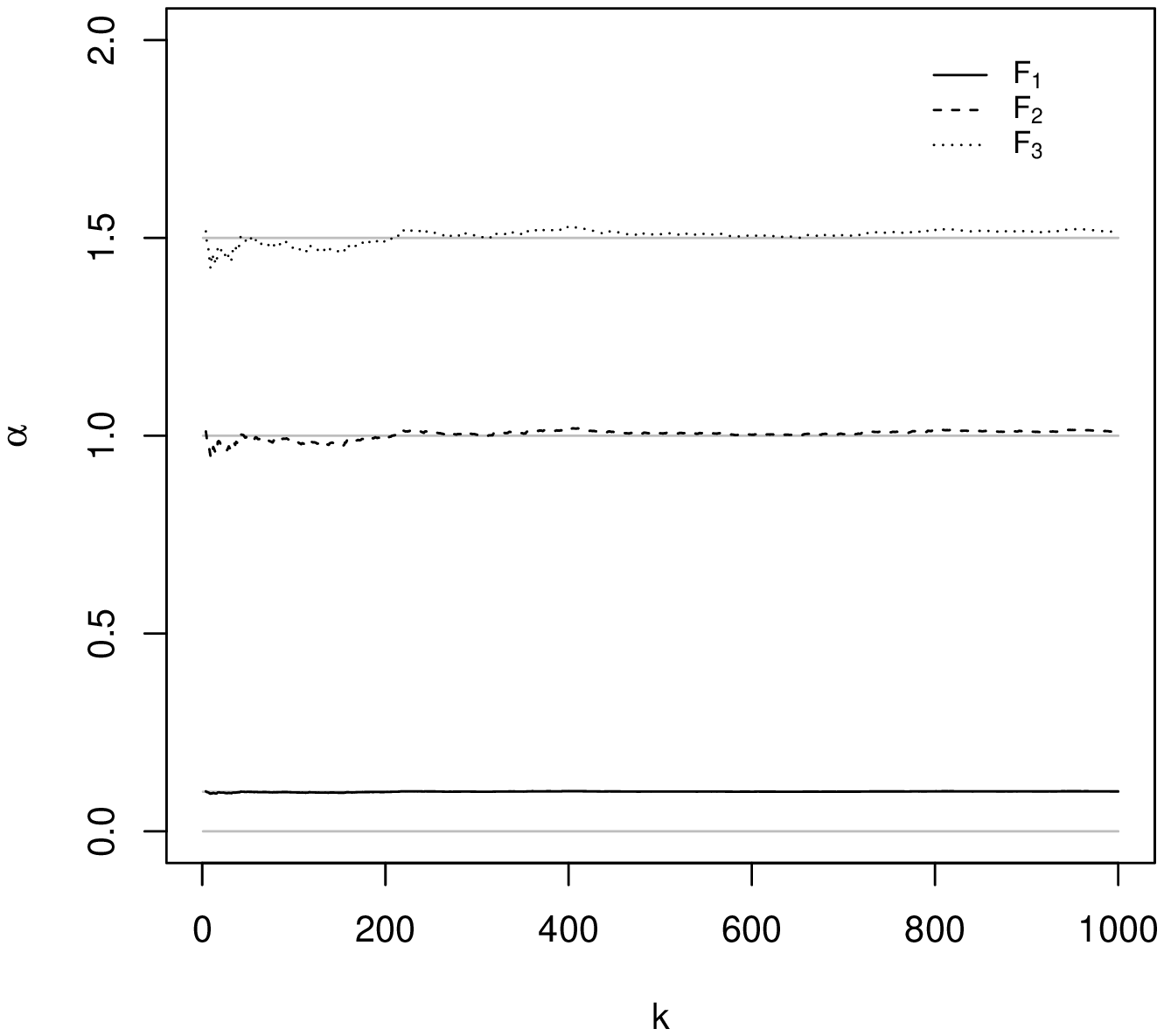}
}
\subfigure[$C$ = 1, $n$ = 1 000]{
\includegraphics[scale=0.29]{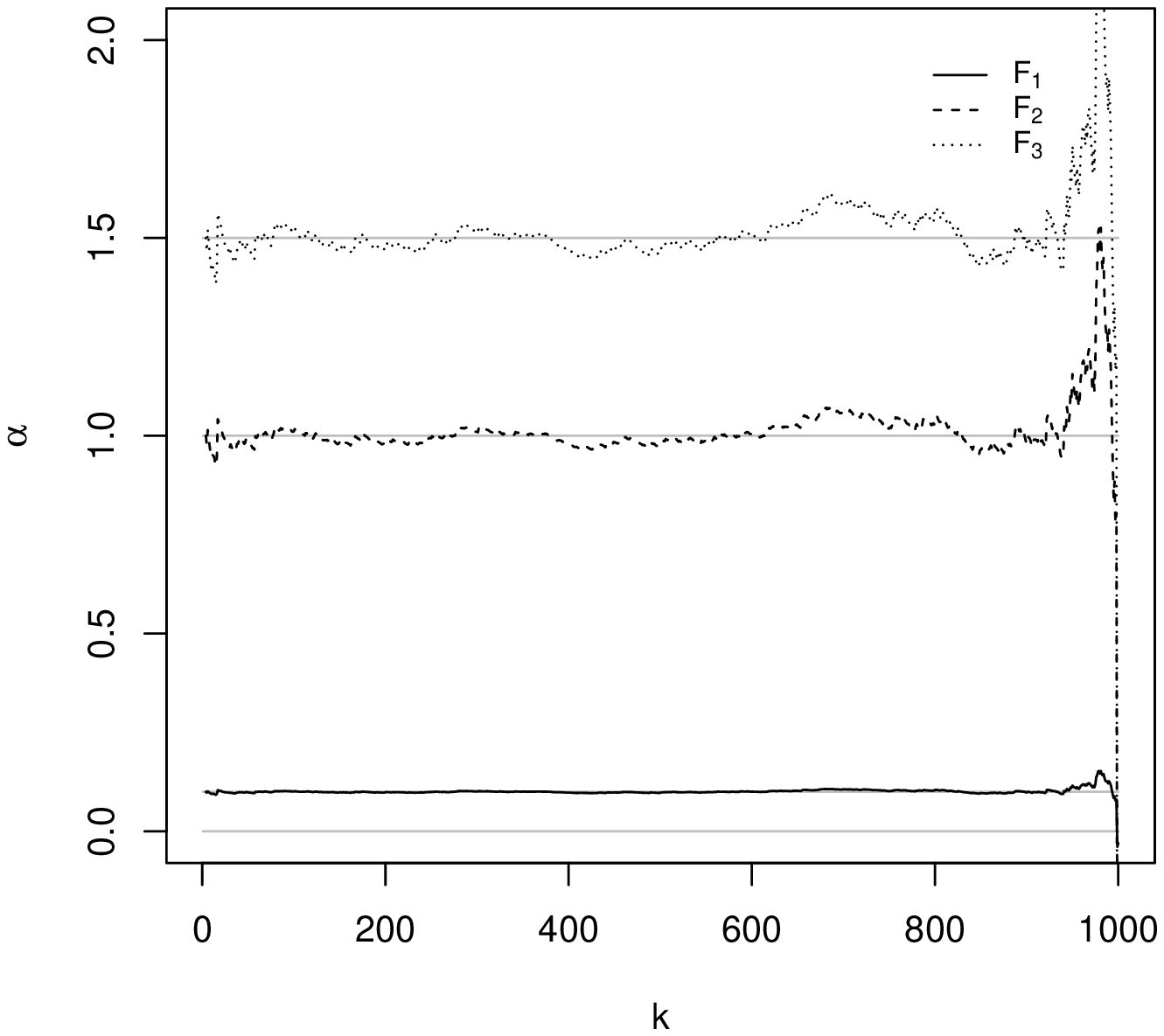}
}
\subfigure[$C$ = 1, $n$ = 10 000]{
\includegraphics[scale=0.29]{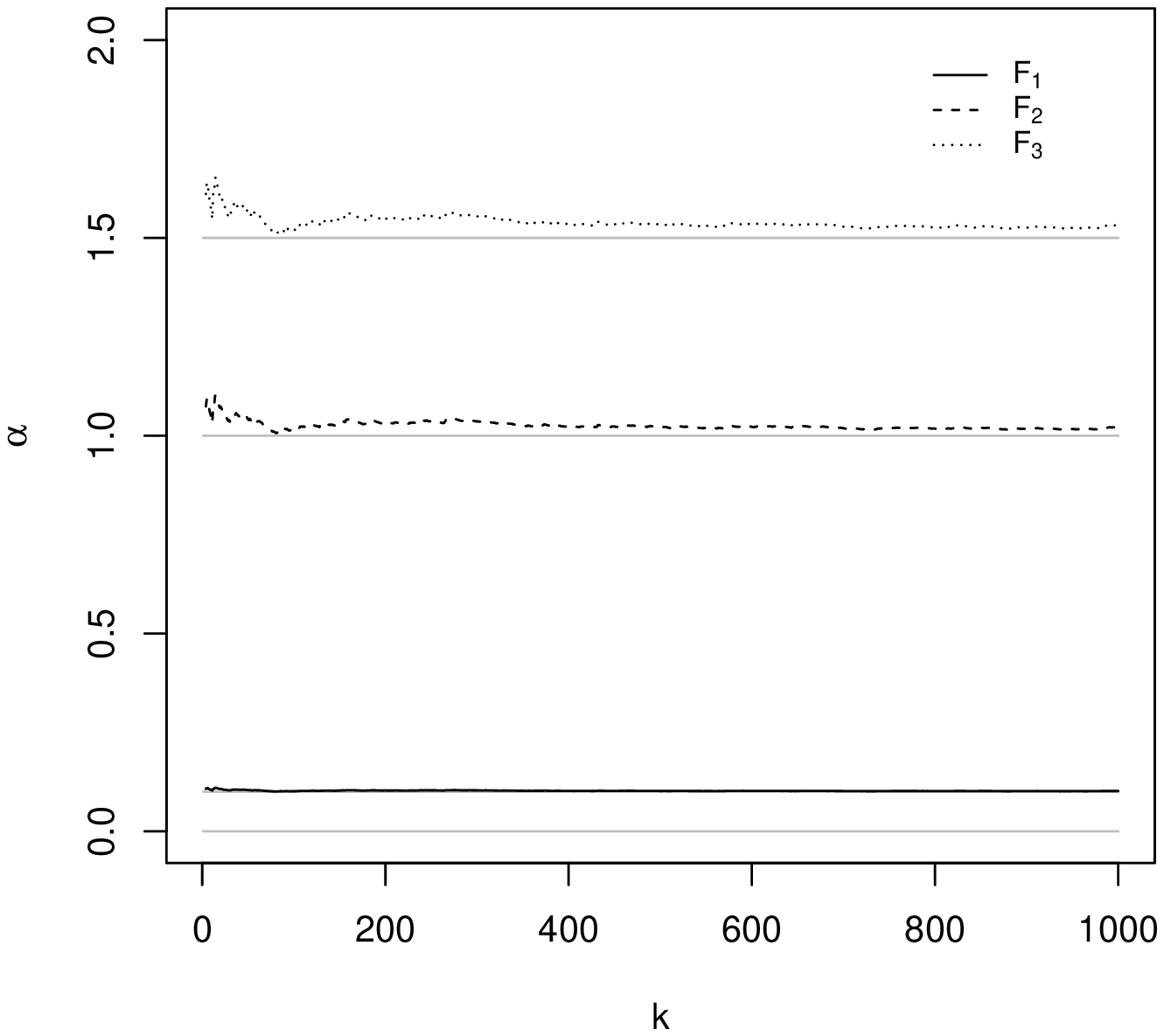}
}
\subfigure[$C$ = 1, $n$ = 100 000]{
\includegraphics[scale=0.29]{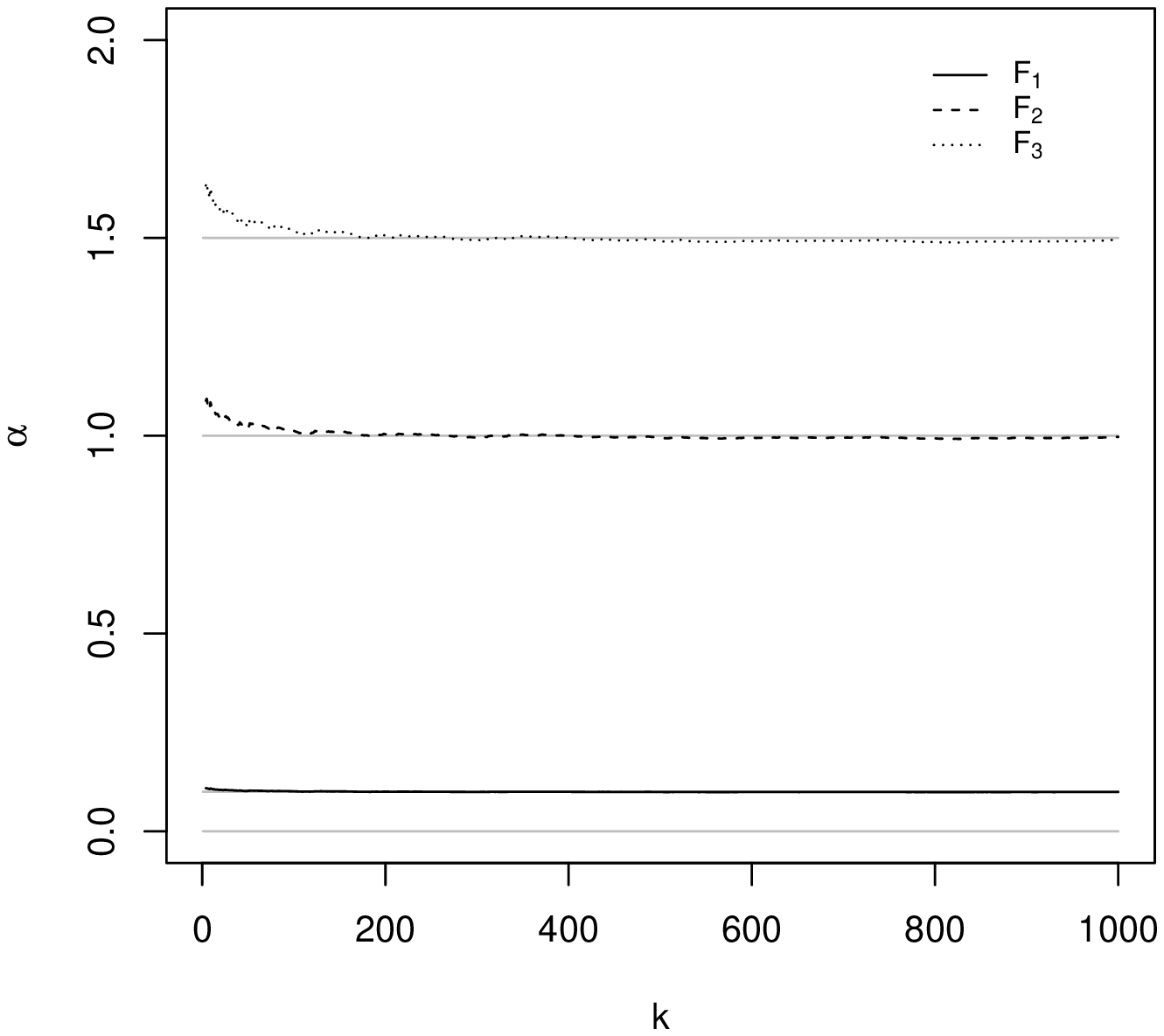}
}
\subfigure[$C$ = 10, $n$ = 1 000]{
\includegraphics[scale=0.29]{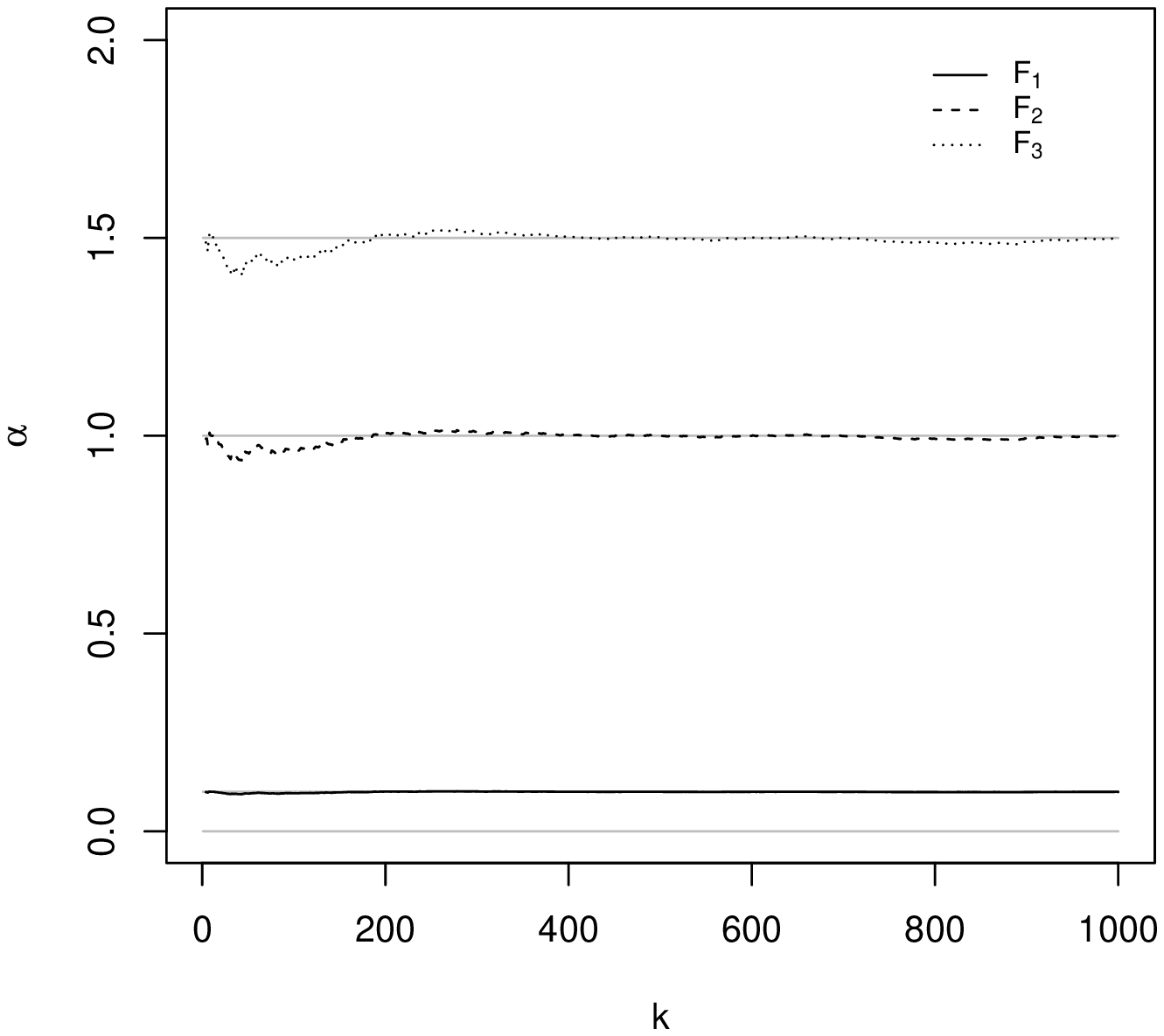}
}
\subfigure[$C$ = 10, $n$ = 10 000]{
\includegraphics[scale=0.29]{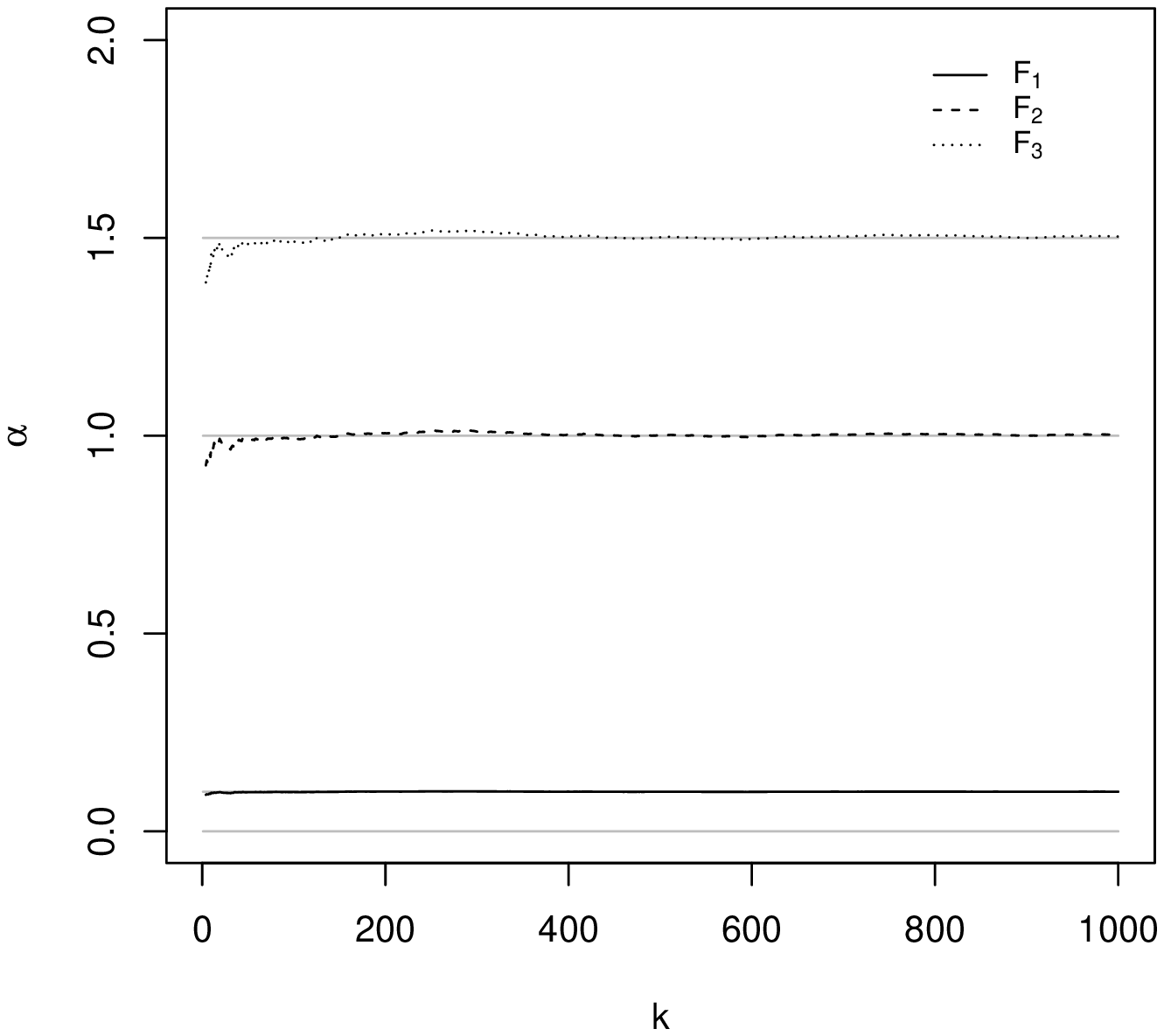}
}
\subfigure[$C$ = 10, $n$ = 100 000]{
\includegraphics[scale=0.29]{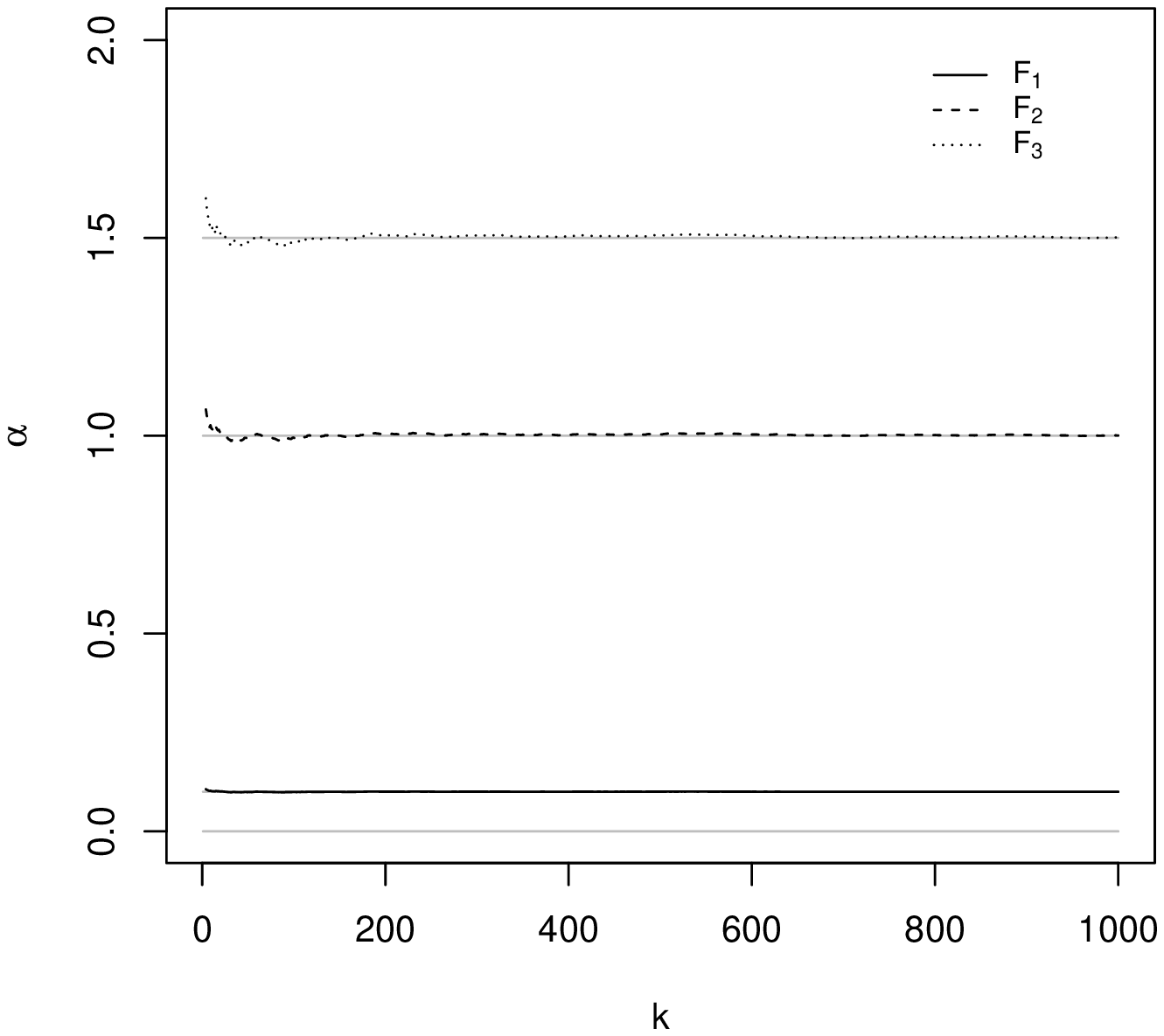}
}
\caption{Plots of $\hat{\alpha}$ against $k$, varying $C$ on row and $n$ on column}
\label{fig01d}
\end{figure}

\subsection{Real application}


In this subsection we evaluate the performance of $\hat{\alpha}$, $\hat{\alpha}_H$ and $\hat{\alpha}_M$ using Danish data on large fire insurance losses.
Data are real, they are obtained from the R SMPraticals add-on package publicly available from CRAN web page \url{{http://cran.r-project.org/}}.
This set consists of 2492 Danish fire insurance losses expressed in millions of Danish Krone (DKK) from the years 1980 to 1990 inclusive and have been adjusted to reflect 1985 values.

These data are frequently used in different contexts, namely to evaluate models or estimators.
For instance, McNeil \cite{McNeil1997} and Resnick \cite{Resnick1997} evaluate some tail estimators using extreme value theory, considering mainly losses above DKK 1 million.
The latter author concludes a likely value of $\alpha=1.45$.

We compute estimates of $\hat{\alpha}$ (fixing $C=1$), $\hat{\alpha}_H$ and $\hat{\alpha}_M$ for both all Danish data and data above DKK 1 million.
In the latter case there are 2167 observations.
Figure \ref{fig02} presents plots of these two cases.
To help the interpretation of estimates, a reference line is included at $\alpha=1.45$.

In the case of all registers, i.e. $n=2492$, all of estimators approximate to or cross $\alpha=1.45$, including the new estimator. 
However, it seems that the convergence level would be a bit lower since at least $\hat{\alpha}$ (solid line) and $\hat{\alpha}_H$ (solid line in bold) seems to oscillate around to $\alpha=1.41$.
Then, the first estimators reaching convergence seem to be $\hat{\alpha}$ and $\hat{\alpha}_H$ but presenting oscillations, when $k$ is between 250 and 500.
As $k$ is over 500 these estimators make a big oscillation that seems to be completed for $k$ over 1 000.
On the other hand, $\hat{\alpha}_M$ (dashed line in bold) shows the slowest convergence with respect to the other competitors.
In any way, it seems that this estimator converges toward $\alpha=1.41$ when $k$ increases over 1 000.

Analyzing losses above DKK 1 million, that gives a sample of $n=2167$ observations.
Then, the results obtained when $n=2492$ hold excepting for the new estimator.
This happens since $\hat{\alpha}_H$ and $\hat{\alpha}_M$ depend on the $k$ highest values observed, independently of the subsample size of the highest values of a given sample.
Unlike to these estimators, $\hat{\alpha}$ depends on the sample size.
Hence, new estimates of $\hat{\alpha}$ show a different behavior to the ones found when $n=2492$.
They show a trend to decrease from $k$ = 200, without evidencing any convergence.
This fact shows that variations of subsample sizes of the highest values of a given sample may sensibly affect the new estimator.


\begin{figure}[!ht]
\centering
\subfiguretopcapfalse
\subfigure[$n=2492$]{
\includegraphics[scale=0.45]{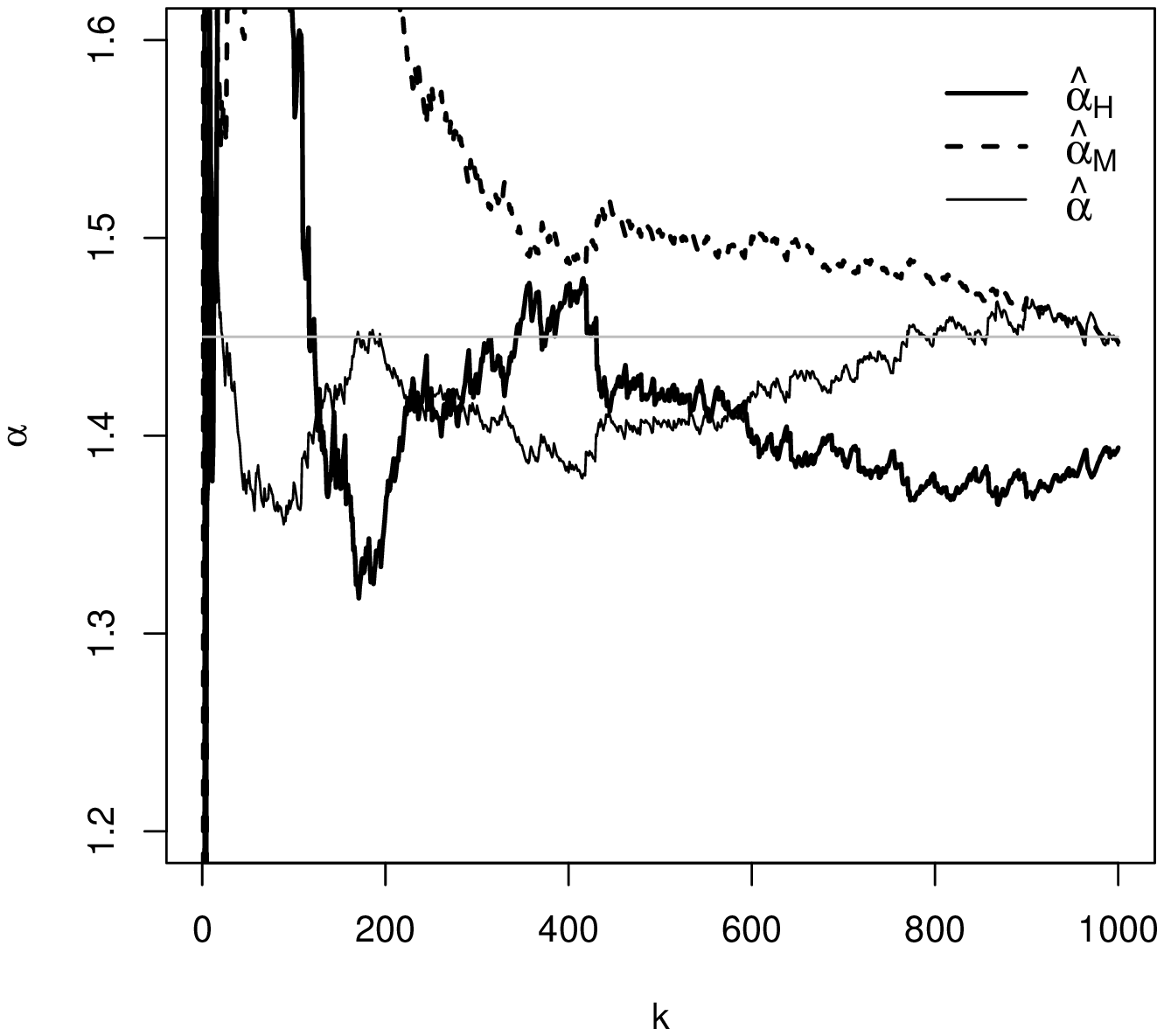}
}
\subfigure[$n=2167$]{
\includegraphics[scale=0.45]{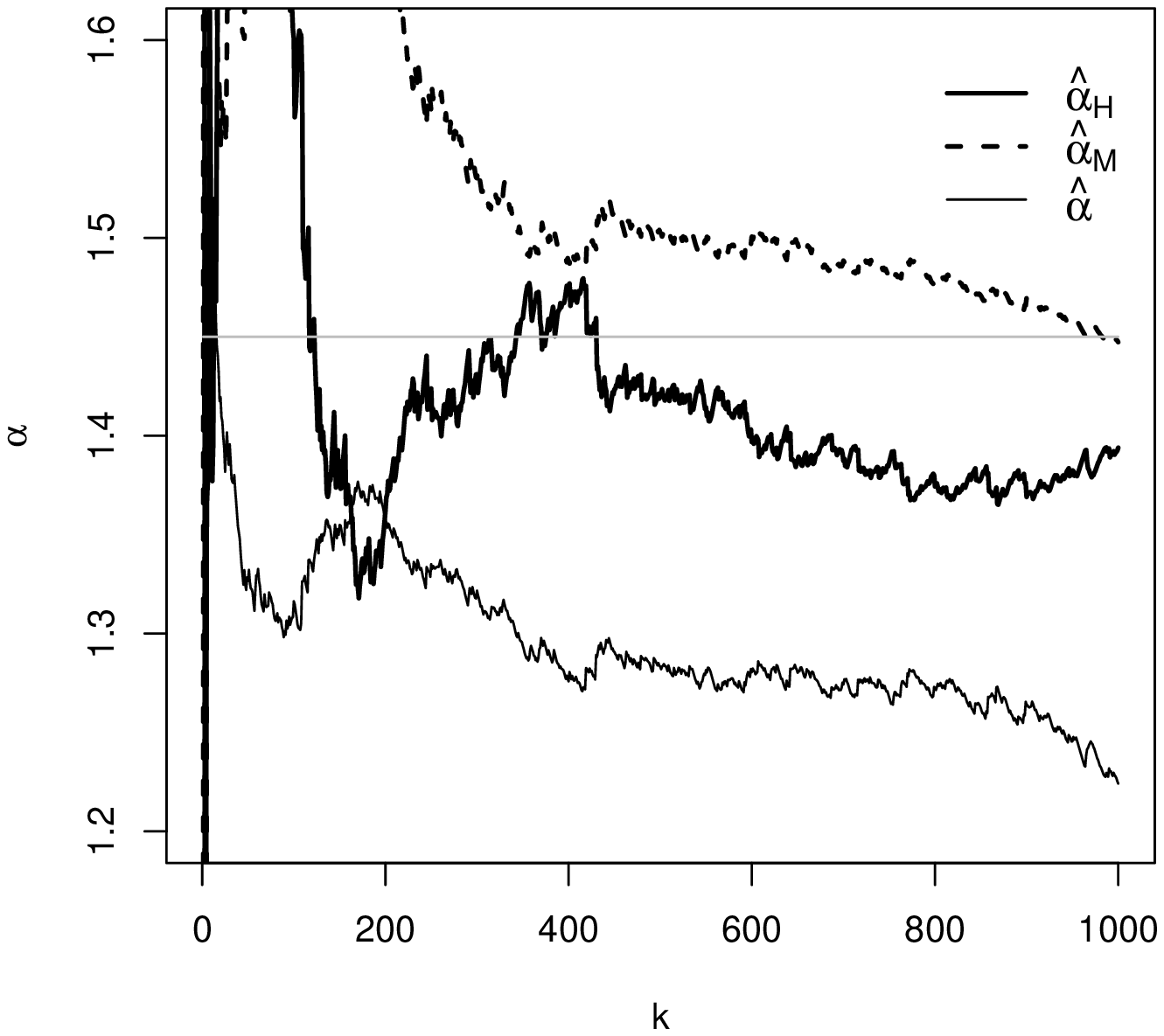}
}
\caption{Plots for some estimators of $\alpha$ against $k$, danish data of losses}
\label{fig02}
\end{figure}





\section{Conclusions}
\label{conclusions}

A simple estimator for the $\M$-index of the class $\M$, larger than the class of regularly varying functions, was formulated.
It has good properties like strong consistency and asymptotic normality, with a better rate of convergence than the one of most well-known estimators.
This last feature is illustrated in simulated examples where estimates converged faster than other competitors, excepting when scale parameters are lower than 1.
Illustrations based on simulations showed that higher scale parameters, lower shape parameters or large sample sizes improve the convergence of the new estimator.
Furthermore, in an example using real data the new estimator confirmed estimates given by some classical estimators, when the whole sample was used. 
However, estimates given by the new estimator varied when a subsample concentrating on the highest losses was used, showing the sensibility of the new estimator to variations of sample sizes.

\section*{Acknowledgments} 

The author gratefully acknowledges the support of SWISS LIFE through its ESSEC research program on 'Consequences of the population ageing on the insurances loss'.
I am indebted to Marie Kratz for her reading of an early version of this manuscript.


\appendix

\section{Proof of Theorem \ref{teoCK}}

Let $\eta\in\Rset$ and let $U:\Rset^+\to\Rset^+$ be a measurable function.

\begin{proof}[Proof of the necessary condition]
Let $\epsilon>0$ and $U\in\M$ with $\M$-index $\eta$.
One has, by definition, that
$$
\limx\frac{U(x)}{x^{\eta+\epsilon}}=0
\quad\textrm{and}\quad
\limx\frac{U(x)}{x^{\eta-\epsilon}}=\infty\textrm{.}
$$
Hence, there exists $x_0\geq1$ such that, for $x\geq x_0$,
$$
U(x)\leq\epsilon x^{\eta+\epsilon}
\quad\textrm{and}\quad
U(x)\geq\frac{1}{\epsilon}x^{\eta-\epsilon}\textrm{.}
$$
Applying the logarithm function to these inequalities and dividing them by $\log(x)$ (with $x>1$) provide
$$
\frac{\log\left(U(x)\right)}{\log(x)}\leq\frac{\log\left(\epsilon\right)}{\log(x)}+\eta+\epsilon
\quad\textrm{and}\quad
\frac{\log\left(U(x)\right)}{\log(x)}\geq-\frac{\log\left(\epsilon\right)}{\log(x)}+\eta-\epsilon\textrm{,}
$$
and, one then has
$$
\limsupx\frac{\log\left(U(x)\right)}{\log(x)}\leq\eta+\epsilon
\quad\textrm{and}\quad
\liminfx\frac{\log\left(U(x)\right)}{\log(x)}\geq\eta-\epsilon\textrm{,}
$$
from which one gets, taking $\epsilon$ arbitrary,
$$
\eta\leq\liminfx\frac{\log\left(U(x)\right)}{\log(x)}\leq\limsupx\frac{\log\left(U(x)\right)}{\log(x)}\leq\eta\textrm{,}
$$
and the assertion follows.
\end{proof}

\begin{proof}[Proof of the sufficient condition]
Let $\epsilon>0$.
By hypothesis, there exists $x_0>1$ such that, for $x\geq x_0$, $\big|\log(U(x))\big/\log(x)-\eta\big|\leq\epsilon\big/2$.

Writing, for $w\in\big\{\epsilon,-\epsilon\big\}$,
$$
\frac{U(x)}{x^{\eta+w}}
=\exp\left\{\log(x)\times\left(\frac{\log(U(x))}{\log(x)}-\eta-w\right)\right\}
$$
gives
$$
\exp\left\{\log(x)\times\left(-\frac{\epsilon}{2}-w\right)\right\}
\leq
\frac{U(x)}{x^{\eta+w}}
\leq
\exp\left\{\log(x)\times\left(\frac{\epsilon}{2}-w\right)\right\}\textrm{,}
$$
and then,
$$
\limx\frac{U(x)}{x^{\eta+\epsilon}}\leq
\limx\exp\left\{\log(x)\times\left(\frac{\epsilon}{2}-\epsilon\right)\right\}=0
$$
and
$$
\limx\frac{U(x)}{x^{\eta-\epsilon}}\geq
\limx\exp\left\{\log(x)\times\left(\frac{\epsilon}{2}+\epsilon\right)\right\}=\infty\textrm{.}
$$
These two limits provide $U\in\M$ with $\M$-index $\eta$.
\end{proof}

\end{document}